\documentclass[10pt]{article}
\usepackage{fancyhdr}
\usepackage{extramarks}
\usepackage{amsmath}
\usepackage{amsthm}
\usepackage[utf8]{inputenc}   % arXiv/pdflatex-friendly
\usepackage[T1]{fontenc}
\usepackage{newunicodechar}   % map a few Unicode run-ins to TeX
 \usepackage{dblfloatfix} % improves placement of figure* in two-column docs
\usepackage{float}

\usepackage{booktabs}
\usepackage{tabularx}

\newunicodechar{α}{$\alpha$}
\newunicodechar{β}{$\beta$}
\newunicodechar{γ}{$\gamma$}
\newunicodechar{κ}{$\kappa$}
\newunicodechar{δ}{$\delta$}

\newunicodechar{–}{--}       % en dash
\newunicodechar{—}{---}      % em dash
\newunicodechar{’}{'}        % apostrophe
\newunicodechar{±}{$\pm$}

\newunicodechar{­}{}
\usepackage{amsfonts}
\usepackage{siunitx}
\usepackage{tikz}
\usepackage[plain]{algorithm}
\usepackage{algpseudocode}
\usepackage{multirow}
\usepackage{booktabs}
\usepackage{graphicx}
\usepackage{subfigure}
\usepackage[margin=1in]{geometry}
\usepackage{booktabs}
\usepackage{makecell}
\usepackage{array} 
\usepackage[colorlinks,linkcolor=black,anchorcolor=black,citecolor=black,urlcolor=blue]{hyperref}
\usepackage{hyphenat}
\usepackage{amsmath,bm}
\usepackage{booktabs}
\usepackage{mathtools}
\usepackage{amssymb}
\usepackage{tikz-cd}
\usepackage{caption}
\usepackage{capt-of}
\usepackage{mciteplus}
\usepackage{cite}
\usepackage{mathrsfs}
\usepackage[title,titletoc,toc]{appendix}
\usepackage{xr}
\usepackage{parskip}
\usepackage{soul}
\usepackage{textcomp}
\usepackage[colaction]{multicol}
\usepackage[switch]{lineno}
\usepackage{lipsum}
\usepackage{etoolbox}
\usepackage{longtable}
\usepackage{array}
\usepackage{tablefootnote}
\usepackage{ragged2e}
\usepackage{soul}
\newcolumntype{C}[1]{>{\centering\arraybackslash}p{#1}}
\usetikzlibrary{automata,positioning}
\usepackage{float}

\usetikzlibrary{automata,positioning}

\newtheorem{remark}{Remark}[section]

\newtheorem{lemma}{Lemma}[section]

\newtheorem{proposition}{Proposition}[section]
\newtheorem{corollary}{Corollary}[section]

\def\Gr{{\mbox{\bf Gr}}}

\def\arg{{\mbox{arg}\,}}

\usepackage{amsthm}  %  
\theoremstyle{definition}
\newtheorem{definition}{Definition}[section]

\usepackage{xr}
\begin{document}
	\title{Data-Adaptive Grassmann Manifold Representations for Spatial Transcriptomics Alignment}
    %\title{Precision Drug Repurposing for Alzheimer's Disease Enabled via Persistent Sheaf Laplacian Analysis of Gene Co-Expression Networks }
    
	\author{Xiang Xiang Wang$^{1}$, Sean Cottrell$^{1,2}$, and Guo-Wei Wei$^{1,3,4}$\footnote{
			Corresponding author.		Email: guowei.wei@uga.edu} \\% Author name
		\\
		$^1$ Department of Mathematics, \\
		Michigan State University, East Lansing, MI 48824, USA.\\
        $^2$ Department of Computational Mathematics, Science, and Engineering, \\
        Michigan State University, East Lansing, MI 48824, USA. \\	
        $^3$ Department of Mathematics, \\ University of Georgia, Athens, GA  30602, USA.\\
		$^4$Department of Biochemistry and Molecular Biology, \\ University of Georgia, Athens, GA  30602, USA.\\
	}
	\date{June 10, 2026} % Date for the report
	
	\maketitle
	
 % SIAM Article Template

\begin{abstract}
Spatial transcriptomics measures gene expression together with spatial coordinates, but many existing analysis methods represent each spot primarily by a single feature vector. We propose GrassST, a  subspace method for spatial transcriptomics analysis and cross-slice alignment. For each spatial spot, GrassST constructs a neighborhood from tissue coordinates, fits a low-dimensional subspace to the embedded expression profiles in that neighborhood, and represents the spot by the resulting  subspace. These representations are points on a Grassmann manifold and can be compared using distances between subspaces. GrassST selects the neighborhood size and subspace rank from the spectral energy of the data, allowing these parameters to vary across datasets rather than being fixed globally. Experiments on four spatial transcriptomics datasets show that GrassST achieves competitive clustering and cross-slice integration performance under a unified evaluation pipeline.

\end{abstract}

\noindent\textbf{Keywords:} spatial transcriptomics, Grassmann manifold, subspace representation, adaptive parameter selection,  cross-slice alignment 

\noindent\textbf{MSCcodes:} 14M15, 53B20,  65D18
	
	%{\setcounter{tocdepth}{4} \tableofcontents}
	%\setcounter{page}{1}

% 

 \section{Introduction}

% Spatial transcriptomics measures gene expression while retaining spatial information from tissue sections, allowing transcriptional profiles to be analyzed together with their tissue locations \cite{Stahl2016Science,Vickovic2019NatMethods}. Such data can be expressed as pairs $(x_i,s_i)$, where $x_i\in\mathbb{R}^d$ denotes a high-dimensional expression vector and $s_i\in\mathbb{R}^2$ denotes the corresponding spatial coordinate: each location carries a high-dimensional feature vector, and nearby locations are expected to share local structure induced by tissue organization. 
% When multiple tissue sections are analyzed jointly, a central task is spatial transcriptomics alignment: constructing a common representation in which related tissue regions across sections can be compared while accounting for both transcriptional similarity and spatial organization \cite{Zeira2022NatMethods,zhou2023integrating}.
%  This setting raises the representation problem studied in this paper: how can stable local features be constructed from noisy high-dimensional observations so that neighborhood-level variation is preserved and aligned across tissue sections?

Single-cell RNA sequencing has become a standard approach for profiling cellular heterogeneity, identifying cell populations, and studying transcriptional state changes in complex tissues \cite{trapnell2014dynamics,haghverdi2016diffusion,wolf2018scanpy,choi2019dissecting, qiu2022mapping}. Its rapid development has also motivated a broad range of computational methods for clustering, dimensionality reduction, generative modeling, and geometric analysis of cell-state structure \cite{lopez2018deep,tian2019clustering,cottrell2024k,huynh2024topological}. However, dissociation-based single-cell measurements generally remove cells
from their native tissue context, so the spatial locations of cells and their
local neighborhood relationships are not directly observed
\cite{Stahl2016Science,Vickovic2019NatMethods}.
 This limitation motivates spatially resolved transcriptomic technologies, which aim to measure gene expression while retaining single cells' location geometric organization.

 Spatial transcriptomics measures gene expression while retaining spatial information from microenvironment, allowing transcriptional profiles to be analyzed together with their tissue locations \cite{Stahl2016Science,Vickovic2019NatMethods}. Such data can be expressed as pairs $(x_i^{(b)},s_i^{(b)})$, where $x_i^{(b)}\in\mathbb{R}^d$ is the gene expression vector of spot $i$ in tissue section $b$, and $s_i^{(b)}\in\mathbb{R}^2$ is the corresponding spatial coordinate.
 Local neighborhoods may contain related transcriptional patterns due to tissue organization. When multiple tissue sections are analyzed jointly, a central task is spatial transcriptomics alignment: constructing a common representation in which related tissue regions across sections can be compared while accounting for both transcriptional similarity and spatial organization \cite{Zeira2022NatMethods,zhou2023integrating}. This setting motivates the need for representations that can support reliable alignment across tissue sections.

% This setting raises a basic representation problem. Given noisy high-dimensional observations on one or more spatial domains, how should one construct stable local geometric features that preserve neighborhood-level variation and allow comparison across spatial locations and tissue sections?

Most existing computational methods for spatial transcriptomics represent each spot or cell by a pointwise feature vector, obtained either from gene expression measurements or from learned latent embeddings. Spatial information is then incorporated through graph smoothing, neighborhood aggregation, attention mechanisms, contrastive learning, or probabilistic alignment models \cite{Hu2021NatMethodsSpaGCN,Dong2022NatCommunSTAGATE,Li2023NatCommunPRECAST,Long2023NatCommunGraphST,zhou2023integrating,lin2024contrastive}. 
These methods have been effective for spatial domain identification and multi-slice integration. However, from a geometric point of view, representing each spot separately may be insufficient for characterizing the local organization of spatial neighborhoods. In particular, local neighborhoods can contain coherent directions of variation that are not captured by comparing single expression vectors alone.

This observation suggests a subspace-based representation for each spatial spot. Around such a spot, a local neighborhood is constructed from nearby spatial locations. If the embedded expression profiles within this neighborhood vary mainly along a few directions, then the center spot can be represented by a low-dimensional subspace fitted to its neighborhood. Comparing these spot-level subspaces then provides a way to compare local tissue structure across sections. The fitted subspace records the main directions of local expression change around the center spot, while reducing the influence of directions with small variation. Distances between these fitted subspaces can then be used to compare the local expression structure around spots from different sections.

 The appropriate mathematical space for these objects is the Grassmann manifold $\Gr(r,p)$, whose points are $p$-dimensional linear subspaces of $\mathbb{R}^r$ \cite{Bendokat2024,Edelman1998SIAM,Absil2008Book}. Grassmannian representations have been widely used in numerical linear algebra, signal processing, computer vision, and manifold learning \cite{Edelman1998SIAM,Absil2008Book,Dong2014TSP,WangTam2026JMIV}. They have also been applied to multi-omics integration and cancer patient stratification \cite{Ding2019Bioinformatics,Alfatemi2022BMC}. Representative spatial transcriptomics methods, including SEDR \cite{SEDR2023NatCommun}, STAGATE \cite{Dong2022NatCommunSTAGATE}, and GraphST \cite{Long2023NatCommunGraphST}, commonly rely on deep spatial embeddings or graph-based representations rather than  Grassmannian subspaces. More broadly, applied mathematical approaches have been introduced to model structure in high-dimensional biological and scientific data, including supervised optimal transport for transport-based matching with prior information and supervised Gromov--Wasserstein optimal transport for structured matching with metric-preserving constraints \cite{CangNieZhao2022SIAM,CangWuZhao2025SIAMMDS}. Recent work on geometric representations for single-cell and spatial data further supports the use of structured mathematical representations for transcriptomic data \cite{Maehara2025NatCommunDdHodge,Cottrell2025AdvSciMCIST,wang2026multiscale}.

In this work, we propose GrassST, a Grassmann subspace method for spatial transcriptomics alignment. The method first maps all expression profiles into a shared feature space. For each spot, GrassST constructs a spatial neighborhood using tissue coordinates and fits a low-dimensional subspace to the embedded expression profiles in that neighborhood. The fitted subspace is assigned to the center spot as its local representation. Thus, for spot $i$ in section $b$, GrassST defines
\[
\Phi_{k,p}: s_i^{(b)} \mapsto \mathcal{U}_{b,i} \in \Gr(r,p),
\]
where $\mathcal{U}_{b,i}$ is the $p$-dimensional subspace fitted to the $k$-neighborhood of spot $i$ in section $b$. Pairwise distances between these subspaces are then used to compare local expression patterns across spots and sections. These distances are computed from projection matrices and therefore, do not depend on the particular orthonormal basis chosen for each subspace.

Parameters $k$ and $p$ play a central role in the construction. The neighborhood scale $k$ determines the spatial resolution of the local covariance estimate. Small neighborhoods preserve fine spatial structure but may yield unstable covariance estimates, whereas large neighborhoods improve sampling stability but may mix distinct tissue regions. The rank $p$ determines the dimension of the retained subspace. A small rank may discard meaningful directions of variation, whereas a large rank may include directions associated with small eigenvalues and reduce the stability of the resulting Grassmann manifold representation. Thus, the quality of $\Phi_{k,p}$ depends on a coupled choice of spatial scale and subspace dimension.

To address this issue, we develop a data-adaptive procedure for selecting both $k$ and $p$ from the local spectral structure of the data. This avoids using fixed neighborhood scales and subspace ranks across datasets. The full selection rule is described in Section~\ref{subsec:adaptive}.
The selection procedure is motivated by the stability of the fitted subspaces. Neighborhood scales and subspace ranks that better capture the dominant local variation tend to produce more stable Grassmann manifold  representations. The stability analysis is provided in Section~\ref{subsec:perturbation}.

The main contributions of this work are as follows. First, we formulate a Grassmann manifold subspace representation for spatially indexed high-dimensional transcriptomic data, where each spatial spot is represented by a subspace fitted to its local neighborhood. Second, we derive the GrassST construction, which maps spatial spots to points on $\Gr(r,p)$ through subspace estimation and Grassmann manifold distance. Third, we introduce a data-adaptive selection procedure for the neighborhood scale $k$ and subspace rank $p$ based on cumulative spectral energy and stability considerations. Fourth, we validate the resulting representation on multiple spatial transcriptomics datasets, including DLPFC, HER2-positive breast cancer, Barista-seq, and MERFISH data, and examine its performance in clustering, cross-slice alignment, and robustness analyses.

The rest of the paper is organized as follows. Section 2 reviews the mathematical foundations used in the proposed framework, including Grassmann manifolds, projector representations, and distances between subspaces. Section 3 introduces the GrassST construction and the adaptive selection procedure for the neighborhood scale and subspace rank. Section 4 reports numerical experiments on spatial transcriptomics datasets. Section 5 discusses the methodological implications and limitations. Section 6 concludes the paper.

\section{Mathematical Foundations}
\label{sec:math_foundations}

This section introduces the notation and geometric background used in the
GrassST construction, including the Grassmann manifold, projector
representations, and projection-based distances between subspaces. The main
notation is summarized in Table~\ref{tab:notation}.

\subsection{Grassmann Manifolds and Projector Representation}
\label{subsec:grassmann}

For integers $1 \leq p \leq r$, the Grassmann manifold $\Gr(r,p)$ is the set of all $p$-dimensional linear subspaces of $\mathbb{R}^r$:
\[
\Gr(r,p)
=
\{\mathcal{U} \subset \mathbb{R}^r : \dim(\mathcal{U})=p\}.
\]
An element $\mathcal{U}\in\Gr(r,p)$ can be represented by an orthonormal basis matrix
\[
U \in \mathbb{R}^{r\times p},
\qquad
U^\top U=I_p,
\]
whose columns span $\mathcal{U}$. This representation is not unique. If $Q\in \mathrm{O}(p)$ is an orthogonal matrix, then $U$ and $UQ$ span the same subspace.

\begin{table}[htbp]
\centering
\small
\begin{tabular}{c|p{0.8\textwidth}}
\toprule
\textbf{Symbol} & \textbf{Description} \\
\midrule
$B$ & Number of tissue sections \\
$n_b$ & Number of spatial spots in section $b$ \\
$N=\sum_{b=1}^{B}n_b$ & Total number of spatial spots across all sections \\
$X^{(b)}\in\mathbb{R}^{n_b\times d}$ & Gene expression matrix for section $b$ \\
$x_i^{(b)}\in\mathbb{R}^d$ & Expression vector of spot $i$ in section $b$ \\
$s_i^{(b)}\in\mathbb{R}^2$ & Spatial coordinate of spot $i$ in section $b$ \\
$z_i^{(b)}\in\mathbb{R}^r$ & Embedded feature vector of spot $i$ in section $b$ \\
$r$ & Shared ambient feature dimension after embedding \\
$k$ & Neighborhood patch size \\
$\mathcal{N}_k(b,i)$ & $k$-nearest spatial neighborhood of spot $i$ in section $b$ \\
$p$ & Subspace rank \\
$\Gr(r,p)$ & Grassmann manifold of $p$-dimensional subspaces of $\mathbb{R}^r$ \\
$U_{b,i}\in\mathbb{R}^{r\times p}$ & Orthonormal basis matrix for the subspace associated with spot $(b,i)$ \\
$\mathcal{U}_{b,i}$ & Subspace associated with spot $(b,i)$ \\
$P_{b,i}=U_{b,i}U_{b,i}^{\top}$ & Orthogonal projector onto $\mathcal{U}_{b,i}$ \\
$\Sigma_{b,i}^{(k)}$ & Centered local covariance matrix for neighborhood $\mathcal{N}_k(b,i)$ \\
$E_p^{(b,i,k)}$ & Cumulative spectral energy at rank $p$ for neighborhood $\mathcal{N}_k(b,i)$ \\
$\bar{E}_p(k)$ & Dataset-level average cumulative spectral energy at rank $p$ and patch size $k$ \\
\bottomrule
\end{tabular}
\caption{Notation used in this paper.}
\label{tab:notation}
\end{table}

A basis-independent representation is given by the orthogonal projector
\[
P_U = UU^\top .
\]
The matrix $P_U$ satisfies
\[
P_U^\top=P_U,
\qquad
P_U^2=P_U,
\qquad
\operatorname{rank}(P_U)=p .
\]
Conversely, every symmetric idempotent matrix of rank $p$ is the orthogonal projector onto a unique $p$-dimensional subspace. Thus, the Grassmann manifold can equivalently be represented as
\[
\Gr(r,p)
=
\{P\in\mathbb{R}^{r\times r}: P^\top=P,\; P^2=P,\; \operatorname{rank}(P)=p\}.
\]
This projector representation is particularly convenient for numerical computation because it removes the nonuniqueness of the basis matrix and allows subspaces to be compared through matrices in an ambient Euclidean space \cite{Edelman1998SIAM,Absil2008Book,Bendokat2024}.

\subsection{Distances Between Subspaces}
\label{subsec:distance}

Let $\mathcal{U},\mathcal{V}\in\Gr(r,p)$ have orthonormal basis matrices
$U,V\in\mathbb{R}^{r\times p}$. The principal angles
\[
\theta_1,\ldots,\theta_p \in [0,\pi/2]
\]
between $\mathcal{U}$ and $\mathcal{V}$ are defined by
\[
\cos\theta_\ell=\sigma_\ell(U^\top V),
\qquad
\ell=1,\ldots,p,
\]
where $\sigma_\ell(\cdot)$ denotes the $\ell$th singular value, ordered
nonincreasingly.

Distances on the Grassmann manifold can be expressed in terms of these
principal angles. One standard choice is the geodesic distance
\[
d_{\mathrm{geo}}(\mathcal{U},\mathcal{V})
=
\left(
\sum_{\ell=1}^{p}\theta_\ell^2
\right)^{1/2}.
\]
Another common choice is the chordal distance
\[
d_{\mathrm{ch}}(\mathcal{U},\mathcal{V})
=
\left(
\sum_{\ell=1}^{p}\sin^2\theta_\ell
\right)^{1/2}.
\]
Both distances depend only on the subspaces and not on the particular orthonormal bases used to represent them. In the numerical experiments, GrassST uses the chordal distance $d_{\mathrm{ch}}$ to compare local
subspaces. Other standard Grassmannian distances are discussed in Refs.
\cite{HammLee2008,YeLim2016,Bendokat2024}.

\subsection{Projection Residuals and Principal Subspaces}
\label{subsec:principal_subspace}

We recall two standard facts used in the subspace estimation step of
GrassST. The projection residual decomposition follows from elementary
properties of orthogonal projectors, while the Ky Fan maximum principle gives
the variational characterization of leading eigenspaces. These results are
standard in matrix analysis and numerical linear algebra
\cite{GolubVanLoan2013,StewartSun1990}.

\begin{lemma}[Orthogonal projection residual decomposition]
\label{lem:projection_residual}
Let $U\in\mathbb{R}^{r\times p}$ satisfy $U^\top U=I_p$, and let
$P_U=UU^\top$. Then $P_U$ is the orthogonal projector onto
$\operatorname{span}(U)$. For any $y\in\mathbb{R}^r$,
\[
\|y-P_Uy\|_2^2
=
\|y\|_2^2-\|U^\top y\|_2^2 .
\]
\end{lemma}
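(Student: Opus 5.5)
The plan is to verify the two claims of Lemma~\ref{lem:projection_residual} in turn. Both follow directly from the single hypothesis $U^\top U=I_p$.

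\textbf{Step 1: $P_U$ is the orthogonal projector onto $\operatorname{span}(U)$.} First check the algebraic identities:
\[
P_U^\top=(UU^\top)^\top=UU^\top=P_U,
\qquad
P_U^2=U(U^\top U)U^\top=UU^\top=P_U .
\]
Next identify the range. We have $\operatorname{range}(P_U)\subseteq\operatorname{span}(U)$ because $P_Uy=U(U^\top y)$. Conversely, any $Uc$ satisfies $P_U Uc=U(U^\top U)c=Uc$, so $\operatorname{span}(U)\subseteq\operatorname{range}(P_U)$. Finally, symmetry gives orthogonality of the residual: for any $w=Uc$,
\[
\langle y-P_Uy,\,w\rangle
= \langle (I-P_U)y,\,Uc\rangle
= \langle y,\,(I-P_U)Uc\rangle
=0 .
\]
Hence $y-P_Uy\perp\operatorname{span}(U)$, which characterizes $P_Uy$ as the orthogonal projection of $y$.

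\textbf{Step 2: the residual identity.} Write $y=P_Uy+(y-P_Uy)$. The two summands are orthogonal by Step 1, so the Pythagorean theorem gives
\[
\|y\|_2^2=\|P_Uy\|_2^2+\|y-P_Uy\|_2^2 .
\]
It remains to compute the first term:
\[
\|P_Uy\|_2^2
= y^\top P_U^\top P_U y
= y^\top P_U y
= y^\top UU^\top y
= \|U^\top y\|_2^2 .
\]
Here the middle equality uses symmetry and idempotence from Step 1. Substituting this into the Pythagorean identity and rearranging yields the claimed formula.

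\textbf{Main difficulty.} There is no real obstacle. The only point needing care is that the column orthonormality $U^\top U=I_p$ is exactly what makes $P_U$ idempotent. That same property lets us replace $\|P_Uy\|_2$ by the lower-dimensional norm $\|U^\top y\|_2$, since $U$ acts as an isometry from $\mathbb{R}^p$ into $\mathbb{R}^r$.
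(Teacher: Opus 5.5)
Your proposal is correct and follows essentially the same route as the paper: verify $P_U^\top=P_U$ and $P_U^2=P_U$, use the orthogonality of $P_Uy$ and $y-P_Uy$ together with the Pythagorean identity, and then compute $\|P_Uy\|_2^2=y^\top UU^\top y=\|U^\top y\|_2^2$. Your explicit check that $\operatorname{range}(P_U)=\operatorname{span}(U)$ is a small addition; the paper leaves this step implicit.
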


\begin{proof}
Since $U^\top U=I_p$, we have
\[
P_U^\top=P_U,
\qquad
P_U^2=P_U.
\]
Thus $P_U$ is an orthogonal projector. The two vectors $P_Uy$ and
$(I-P_U)y$ are orthogonal because
\[
(P_Uy)^\top (I-P_U)y
=
y^\top P_U(I-P_U)y
=
y^\top(P_U-P_U^2)y
=
0.
\]
Since
\[
y=P_Uy+(I-P_U)y,
\]
the Pythagorean identity gives
\[
\|y\|_2^2
=
\|P_Uy\|_2^2+\|(I-P_U)y\|_2^2.
\]
Therefore,
\[
\|y-P_Uy\|_2^2
=
\|y\|_2^2-\|P_Uy\|_2^2.
\]
Finally,
\[
\|P_Uy\|_2^2
=
y^\top P_U^\top P_Uy
=
y^\top P_Uy
=
y^\top UU^\top y
=
\|U^\top y\|_2^2.
\]
Combining the two identities proves the claim.
\end{proof}

\begin{lemma}[Ky Fan maximum principle]
\label{lem:ky_fan}
Let $\Sigma\in\mathbb{R}^{r\times r}$ be symmetric, with eigenvalues
\[
\lambda_1(\Sigma)\geq \lambda_2(\Sigma)\geq \cdots \geq \lambda_r(\Sigma).
\]
Then
\[
\max_{\substack{U\in\mathbb{R}^{r\times p}\\ U^\top U=I_p}}
\operatorname{tr}(U^\top \Sigma U)
=
\sum_{\ell=1}^{p}\lambda_\ell(\Sigma).
\]
The maximum is attained by choosing the columns of $U$ to be orthonormal
eigenvectors associated with the $p$ largest eigenvalues of $\Sigma$.
\end{lemma}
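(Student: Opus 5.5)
The plan is to diagonalize $\Sigma$ and reduce the trace maximization to a linear program over a set of weights. By the spectral theorem, write $\Sigma = Q\Lambda Q^\top$, where $Q\in\mathrm{O}(r)$ has orthonormal eigenvector columns $q_1,\ldots,q_r$ and $\Lambda=\operatorname{diag}(\lambda_1,\ldots,\lambda_r)$ with the eigenvalues in nonincreasing order. For any $U$ with $U^\top U=I_p$, set $W=Q^\top U$. Then $W^\top W=I_p$, and
\[
\operatorname{tr}(U^\top\Sigma U)=\operatorname{tr}(W^\top\Lambda W)=\sum_{j=1}^{r}\lambda_j\, w_j,
\qquad
w_j=\sum_{\ell=1}^{p}W_{j\ell}^2=\|W^\top e_j\|_2^2 .
\]
So the objective is a weighted sum of the eigenvalues, with weights $w_j$ determined by $W$.

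The key step is to pin down the admissible weights. First, $\sum_j w_j=\|W\|_F^2=\operatorname{tr}(W^\top W)=p$. Second, $0\le w_j\le 1$: since $WW^\top$ is the orthogonal projector onto $\operatorname{span}(W)$, we have $w_j=e_j^\top WW^\top e_j=\|P_We_j\|_2^2\le\|e_j\|_2^2=1$. This bound can equally be read off from Lemma~\ref{lem:projection_residual} applied with $y=e_j$, because the residual $\|e_j-P_We_j\|_2^2=1-w_j$ is nonnegative.

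With these constraints, a rearrangement argument bounds the sum:
\[
\sum_{j=1}^r\lambda_jw_j-\sum_{j=1}^p\lambda_j
=\sum_{j\le p}(w_j-1)\lambda_j+\sum_{j>p}w_j\lambda_j
\le \lambda_p\Bigl(\sum_{j\le p}(w_j-1)+\sum_{j>p}w_j\Bigr)=0 .
\]
The inequality holds term by term. For $j\le p$, the factor $w_j-1$ is nonpositive and $\lambda_j\ge\lambda_p$. For $j>p$, the factor $w_j$ is nonnegative and $\lambda_j\le\lambda_p$. The final bracket vanishes because $\sum_j w_j=p$. This gives the upper bound $\sum_{\ell\le p}\lambda_\ell$.

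For attainment, take $U=[q_1,\ldots,q_p]$. Then $U^\top\Sigma U=\operatorname{diag}(\lambda_1,\ldots,\lambda_p)$, whose trace is exactly $\sum_{\ell\le p}\lambda_\ell$. Because the feasible set $\{U: U^\top U=I_p\}$ is compact, the supremum is in fact a maximum, and this choice of $U$ realizes it.

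The main obstacle is the weight constraint $w_j\le1$, which is what makes the problem a bounded linear program rather than an unbounded one. I handle it through the projector interpretation of $WW^\top$ described above. The remaining care point is to keep the sign bookkeeping correct in the rearrangement step, which is what allows eigenvalues of arbitrary sign: no positivity assumption on $\Sigma$ is used anywhere in the argument.
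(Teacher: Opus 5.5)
Your proof is correct. Note that the paper does not prove Lemma~\ref{lem:ky_fan} at all: it states it as a standard fact and defers to the matrix-analysis references, so there is no in-paper argument to match.

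\textbf{Comparison with the usual textbook argument.} The common textbook route goes through Courant--Fischer. By Cauchy interlacing (the Poincar\'e separation theorem), the eigenvalues $\mu_1\ge\cdots\ge\mu_p$ of the compression $U^\top\Sigma U$ satisfy $\mu_\ell\le\lambda_\ell(\Sigma)$, and summing gives the bound. Your route works differently. You diagonalize, pass to the weights $w_j=\|W^\top e_j\|_2^2$, and show that they lie in the capped simplex $\{0\le w_j\le 1,\ \sum_j w_j=p\}$. You then bound $\sum_j\lambda_j w_j$ by its value at the vertex $(1,\dots,1,0,\dots,0)$.
\begin{itemize}
\item The interlacing proof is shorter once the minimax machinery is available.
\item Yours is fully elementary and needs only the spectral theorem. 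It also gets the key cap $w_j\le 1$ from the paper's own Lemma~\ref{lem:projection_residual} with $y=e_j$, so it fits neatly into the paper's development.
\end{itemize}

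\textbf{What your route gives beyond the statement.} If you track when your termwise inequalities become equalities, you get a uniqueness result the paper implicitly relies on. Suppose $\lambda_p>\lambda_{p+1}$. Then equality forces $w_j=0$ for all $j>p$. Together with $\sum_j w_j=p$ and $w_j\le 1$, this gives $w_j=1$ for $j\le p$. Hence $\operatorname{span}(U)=\operatorname{span}(q_1,\dots,q_p)$. So the maximizing subspace in Proposition~\ref{prop:pca_solution} is unique under the eigengap condition $\delta_p>0$ that Section~\ref{subsec:perturbation} assumes.

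\textbf{Two small remarks on your commentary.} Neither affects the proof.
\begin{itemize}
\item The compactness sentence is unnecessary, since you exhibit the maximizer explicitly.
\item The cap $w_j\le 1$ is not what makes the program bounded; the set $\{w\ge 0,\ \sum_j w_j=p\}$ is already compact. Without the cap the linear program would return the wrong value $p\lambda_1$, and the cap is what rules that out.
\end{itemize}
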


\section{Subspace Representations on the Grassmann manifold with Adaptive Neighborhood Scale and Subspace Rank}
\label{sec:framework}

Each spatial location in a tissue section carries a high-dimensional gene expression profile. Rather than representing such a location by a single vector in a Euclidean feature space, we associate to it a $p$-dimensional linear subspace of a shared embedding, estimated from the local spatial neighborhood. The collection of these subspaces forms a point cloud on the Grassmann manifold $\Gr(r,p)$, a compact Riemannian symmetric space that admits intrinsic comparison across spatial locations and across tissue sections.

The cross-slice structure is implicit in this construction. A neighborhood graph defined jointly over all sections couples the subspace estimates across sections before any manifold embedding is performed, so that the resulting point cloud on $\Gr(r,p)$ inherits a geometric consistency across sections without requiring explicit registration of the coordinate systems.

\subsection{The GrassST Construction}
\label{subsec:overview}

The GrassST construction is illustrated in Fig.~\ref{fig:frame}. We consider a dataset of $B$ tissue sections. For section $b$, let
\[
X^{(b)} \in \mathbb{R}^{n_b \times d}
\]
denote the gene expression matrix, where $n_b$ is the number of spatial locations and $d$ is the number of measured genes. The spatial coordinates are
\[
S^{(b)} = \{s_1^{(b)}, \ldots, s_{n_b}^{(b)}\},
\qquad s_i^{(b)} \in \mathbb{R}^2 .
\]
The full dataset is
\[
\mathcal{D} = \{(X^{(b)}, S^{(b)})\}_{b=1}^B,
\qquad
N = \sum_{b=1}^B n_b .
\]
GrassST constructs a data-dependent map from indexed spatial locations to the Grassmann manifold,
\[
\Phi_{k,p}: \{(b,i): 1\leq b\leq B,\; 1\leq i\leq n_b\}
\longrightarrow \Gr(r,p),
\qquad
(b,i) \mapsto \mathcal{U}_{b,i}.
\]
Equivalently, writing the indexed location as $s_i^{(b)}$, this assignment can be denoted by
\[
s_i^{(b)} \mapsto \mathcal{U}_{b,i}.
\]
Here $k$ is the neighborhood scale, $p$ is the subspace rank, and $\mathcal{U}_{b,i}$ is the local principal subspace estimated from the embedded expression profiles in the spatial neighborhood of $s_i^{(b)}$. The goal of the construction is to compare spatial locations through local covariance geometry rather than through individual expression vectors alone.

\medskip

\noindent\textbf{Stage 1: Shared feature embedding.}
Expression profiles from different sections are first placed in a common ambient space. Concatenating all sections gives
\[
X^{\mathrm{all}}
=
\begin{bmatrix}
X^{(1)};
\cdots;
X^{(B)}
\end{bmatrix}
\in \mathbb{R}^{N \times d}.
\]
After preprocessing, a linear map
\[
T:\mathbb{R}^d \to \mathbb{R}^r
\]
is obtained from the leading $r$ principal components of $X^{\mathrm{all}}$, yielding
\[
Z^{(b)} = T(X^{(b)}),
\qquad
Z^{\mathrm{all}} \in \mathbb{R}^{N \times r}.
\]
Because $T$ is estimated jointly across all sections, every embedded profile $z_i \in \mathbb{R}^r$ lies in a single coordinate system. This provides a consistent ambient space for the subspace construction.

\medskip

\noindent\textbf{Stage 2: Spatial neighborhood graph.}
For each indexed spot $(b,i)$, define its spatial neighborhood by
\[
\mathcal{N}_k(b,i)
=
\operatorname{kNN}\bigl(s_i^{(b)}; S\bigr),
\]
where \(s_i^{(b)}\in\mathbb{R}^2\) is the spatial coordinate of spot \(i\) in
section \(b\), and \(S\) denotes the coordinate set used for neighborhood
construction. In the default setting,
\[
S=S^{(b)}
=
\begin{bmatrix}
(s_1^{(b)})^\top;
\cdots;
(s_{n_b}^{(b)})^\top
\end{bmatrix}
\in\mathbb{R}^{n_b\times 2},
\]
so neighborhoods are constructed within each tissue section:
\[
\mathcal{N}_k(b,i)
\subset
\{(b,j):1\leq j\leq n_b\}.
\]

If sections are registered or otherwise represented in comparable coordinates,
\(S\) may instead be taken as the pooled coordinate matrix
\[
S^{\mathrm{all}}
=
\begin{bmatrix}
S^{(1)};
\cdots;
S^{(B)}
\end{bmatrix}
\in \mathbb{R}^{N\times 2},
\qquad
N=\sum_{b=1}^{B}n_b .
\]
In this cross-slice setting,
\[
\mathcal{N}_k(b,i)
\subset
\{(b',j):1\leq b'\leq B,\;1\leq j\leq n_{b'}\},
\]
and the neighborhood of spot $(b,i)$ may contain spots from multiple sections.
The scale \(k\) controls the spatial resolution of the representation and is
selected adaptively in Section~\ref{subsec:adaptive}.

\medskip

\noindent\textbf{Stage 3: Subspace estimation.}
Equipped with the neighborhood graph $\{\mathcal{N}_k(b,i)\}$, we estimate a subspace for each spatial spot in each tissue section. Let
\[
Z_{b,i}^{(k)}
=
\{z_{b',j}: (b',j)\in\mathcal{N}_k(b,i)\}
\subset \mathbb{R}^r
\]
denote the embedded expression profiles in the $k$-neighborhood of spot $i$ in section $b$. A local PCA is fitted to this neighborhood, and the first $p$ loading vectors are used as an orthonormal basis for the subspace. Equivalently, if
\[
\bar z_{b,i}^{(k)}
=
\frac{1}{|\mathcal{N}_k(b,i)|}
\sum_{(b',j)\in\mathcal{N}_k(b,i)} z_{b',j}
\]
is the mean of the neighborhood features and
\[
y_{b',j}^{(b,i,k)}
=
z_{b',j}-\bar z_{b,i}^{(k)}
\]
are the centered features, then the local basis $U_{b,i}\in\mathbb{R}^{r\times p}$ solves
\begin{equation}
\label{eq:local_subspace}
U_{b,i}
=
\arg\min_{\substack{U\in\mathbb{R}^{r\times p}\\ U^\top U=I_p}}
\sum_{(b',j)\in\mathcal{N}_k(b,i)}
\left\|
y_{b',j}^{(b,i,k)} - UU^\top y_{b',j}^{(b,i,k)}
\right\|_2^2 .
\end{equation}
This is the standard variational characterization of PCA; in the implementation, the centering is performed internally by the PCA routine.

Equivalently, $U_{b,i}$ is given by the leading $p$ eigenvectors of the centered local covariance matrix
\[
\Sigma_{b,i}^{(k)}
=
\frac{1}{|\mathcal{N}_k(b,i)|}
\sum_{(b',j)\in\mathcal{N}_k(b,i)}
y_{b',j}^{(b,i,k)}
\bigl(y_{b',j}^{(b,i,k)}\bigr)^\top .
\]
The corresponding subspace is
\[
\mathcal{U}_{b,i}
=
\operatorname{span}(U_{b,i})
\in \Gr(r,p).
\]
Repeating this construction for all spatial spots in all sections gives
\[
\{\mathcal{U}_{b,i}:1\leq b\leq B,\;1\leq i\leq n_b\}
\subset \Gr(r,p).
\]
The rank $p$ controls the dimension of each subspace and is selected together with $k$ in Section~\ref{subsec:adaptive}.

\medskip

\noindent\textbf{Stage 4: Intrinsic comparison via Grassmann manifold geometry.}
To compare two local neighborhoods, GrassST uses the projection-Frobenius distance
\[
d_{\mathrm{ch}}(\mathcal{U}_i,\mathcal{U}_j)
=
\|U_i U_i^\top - U_j U_j^\top\|_F .
\]
This distance depends only on the orthogonal projectors onto the two subspaces and is therefore independent of the choice of orthonormal basis. Equivalently, if $\theta_1,\ldots,\theta_p$ are the principal angles between $\mathcal{U}_i$ and $\mathcal{U}_j$, then
\[
d_{\mathrm{ch}}^2(\mathcal{U}_i,\mathcal{U}_j)
=
2\sum_{\ell=1}^{p}\sin^2\theta_\ell .
\]
The pairwise distance matrix
\[
D_{k,p}=(d_{ij}),
\qquad
d_{ij}=d_{\mathrm{ch}}(\mathcal{U}_i,\mathcal{U}_j),
\]
quantifies subspace discrepancy across the dataset.

\medskip

\noindent\textbf{Stage 5: Grassmann manifold representation for cross-slice alignment.}
The map $\Phi_{k,p}$ embeds all spatial locations from multiple tissue sections into a common Grassmannian representation space. In this space, two locations are close in $\Gr(r,p)$ when their local transcriptional neighborhoods share similar dominant subspace structure, regardless of their original slice identity. GrassST therefore aligns spots across sections by comparing their subspaces through the Grassmannian distance structure $D_{k,p}$, rather than directly matching raw gene-expression vectors or spatial coordinates.

The resulting Grassmannian representation supports cross-slice alignment by mixing slice-specific batches while preserving spatial domain organization. It can also be used to construct affinity graphs, perform spectral or graph-based clustering, visualize local geometric organization, and compare locations within or across tissue sections. Thus, the primary object produced by GrassST is the map $\Phi_{k,p}$ together with the induced distance structure $D_{k,p}$, which provides the geometric basis for alignment and downstream analysis.

\begin{figure}[h]
\centering
\includegraphics[width=\textwidth]{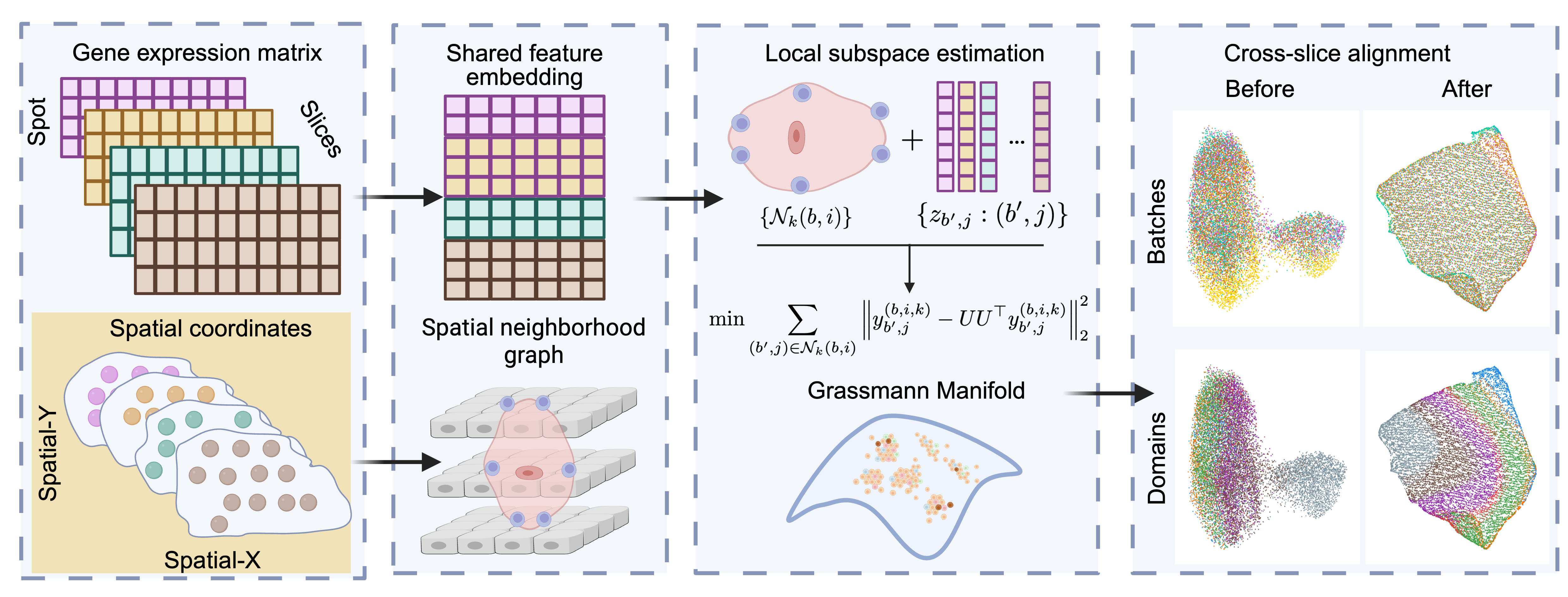}
\caption{
Overview of the GrassST framework for spatial transcriptomics alignment. For each tissue section, GrassST takes as input a gene expression matrix and the corresponding spatial coordinates. The expression matrices from all sections are first mapped into a shared feature embedding, so that spots from different slices are represented in the same ambient space. The spatial coordinates are used to construct a neighborhood graph. For an indexed spot $(b,i)$, where $b$ denotes the tissue section and $i$ denotes the spot within that section, $\mathcal{N}_k(b,i)$ denotes its $k$-nearest spatial neighborhood. The embedded expression vectors in this neighborhood are denoted by $\{z_{b',j}:(b',j)\in\mathcal{N}_k(b,i)\}$, where $(b',j)$ indexes a neighboring spot that may come from the same or another section. After centering these neighborhood features, $y_{b',j}^{(b,i,k)}$ denotes the centered feature vector associated with neighbor $(b',j)$ relative to the neighborhood of $(b,i)$. GrassST fits a local $p$-dimensional subspace by solving the minimization problem
$
U_{b,i}
=
\arg\min_{\substack{U\in\mathbb{R}^{r\times p}\\ U^\top U=I_p}}
\sum_{(b',j)\in\mathcal{N}_k(b,i)}
\left\|
y_{b',j}^{(b,i,k)} - UU^\top y_{b',j}^{(b,i,k)}
\right\|_2^2.$
The resulting column space $\mathcal{U}_{b,i}=\operatorname{span}(U_{b,i})$ is a point on the Grassmann manifold $\Gr(r,p)$. Pairwise Grassmannian distances between these subspaces are then used to compare neighborhood-level structure across sections and support cross-slice alignment. The right panels illustrate the embedding before and after alignment, colored either by batch or by spatial domain.
}
\label{fig:frame}
\end{figure}

\subsection{Subspace Estimation}
\label{subsec:local_grassmann}

The local Grassmann manifold representation is obtained from the dominant eigenspace of a centered local covariance matrix. For an indexed spot $(b,i)$ and neighborhood scale $k$, let
\[
\mathcal{N}_k(b,i)
\]
denote the spatial neighborhood of spot $i$ in section $b$, and let
\[
\bar z_{b,i}^{(k)}
=
\frac{1}{|\mathcal{N}_k(b,i)|}
\sum_{(b',j)\in \mathcal{N}_k(b,i)} z_{b',j}
\]
be the local mean. Define the centered neighborhood features
\[
y_{b',j}^{(b,i,k)}
=
z_{b',j}-\bar z_{b,i}^{(k)},
\qquad
(b',j)\in\mathcal{N}_k(b,i),
\]
and the empirical local covariance matrix
\begin{equation}
\label{eq:local_cov}
\Sigma_{b,i}^{(k)}
=
\frac{1}{|\mathcal{N}_k(b,i)|}
\sum_{(b',j)\in\mathcal{N}_k(b,i)}
y_{b',j}^{(b,i,k)} \bigl(y_{b',j}^{(b,i,k)}\bigr)^\top
\in \mathbb{R}^{r\times r}.
\end{equation}
The centering step ensures that the subspace captures variation within the neighborhood rather than the location of the neighborhood mean in the ambient feature space.

For a prescribed rank $p$, the subspace is defined as the solution of
\begin{equation}
\label{eq:local_subspace_estimation}
U_{b,i}
=
\arg\min_{\substack{U\in\mathbb{R}^{r\times p}\\ U^\top U=I_p}}
\sum_{(b',j)\in\mathcal{N}_k(b,i)}
\left\|
y_{b',j}^{(b,i,k)} - UU^\top y_{b',j}^{(b,i,k)}
\right\|_2^2 .
\end{equation}
This objective selects the $p$-dimensional subspace that minimizes the total projection residual of the centered neighborhood features.

\begin{proposition}
\label{prop:pca_solution}
Let
\[
\lambda_1(\Sigma_{b,i}^{(k)})\geq \lambda_2(\Sigma_{b,i}^{(k)})\geq \cdots
\geq \lambda_r(\Sigma_{b,i}^{(k)})\geq 0
\]
be the eigenvalues of $\Sigma_{b,i}^{(k)}$, with corresponding orthonormal
eigenvectors
\[
u_1,\ldots,u_r .
\]
A solution to~\eqref{eq:local_subspace_estimation} is
\[
U_{b,i} = [u_1\mid \cdots \mid u_p],
\]
and the associated subspace is
\[
\mathcal{U}_{b,i}
=
\operatorname{span}(u_1,\ldots,u_p)
\in \Gr(r,p).
\]
\end{proposition}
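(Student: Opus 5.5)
The plan is to reduce the residual minimization~\eqref{eq:local_subspace_estimation} to a trace maximization, and then invoke Lemma~\ref{lem:ky_fan}. Write $n=|\mathcal{N}_k(b,i)|$ and abbreviate $y_j=y_{b',j}^{(b,i,k)}$. For any $U$ with $U^\top U=I_p$, Lemma~\ref{lem:projection_residual} applied term by term gives
\[
\sum_{(b',j)\in\mathcal{N}_k(b,i)}\|y_j-UU^\top y_j\|_2^2
=\sum_{j}\|y_j\|_2^2-\sum_{j}\|U^\top y_j\|_2^2 .
\]
The first sum does not depend on $U$. Hence minimizing the objective is equivalent to maximizing $\sum_j\|U^\top y_j\|_2^2$ over the same feasible set.

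Next I rewrite the remaining sum via the covariance~\eqref{eq:local_cov}. Since $\|U^\top y_j\|_2^2=\operatorname{tr}(U^\top y_jy_j^\top U)$, linearity of the trace gives
\[
\sum_j\|U^\top y_j\|_2^2=\operatorname{tr}\Bigl(U^\top\Bigl(\sum_j y_jy_j^\top\Bigr)U\Bigr)=n\,\operatorname{tr}\bigl(U^\top\Sigma_{b,i}^{(k)}U\bigr).
\]
Because $n>0$, the problem becomes maximizing $\operatorname{tr}(U^\top\Sigma_{b,i}^{(k)}U)$ subject to $U^\top U=I_p$.

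The matrix $\Sigma_{b,i}^{(k)}$ is symmetric, so Lemma~\ref{lem:ky_fan} applies. The maximum value is $\sum_{\ell=1}^p\lambda_\ell(\Sigma_{b,i}^{(k)})$, and it is attained at $U=[u_1\mid\cdots\mid u_p]$. Since the objectives differ only by a constant and a positive factor, this $U$ is also a minimizer of~\eqref{eq:local_subspace_estimation}. Its columns are orthonormal and span a $p$-dimensional subspace, so $\operatorname{span}(u_1,\ldots,u_p)\in\Gr(r,p)$.

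Nonnegativity of the eigenvalues follows from $\Sigma_{b,i}^{(k)}$ being a sum of positive semidefinite rank-one terms, though the argument does not need it. There is no real obstacle here. The only point needing care is that the statement claims \emph{a} solution, so uniqueness is not needed. When $\lambda_p=\lambda_{p+1}$, other eigenbases are also optimal, and the proof never relies on uniqueness.
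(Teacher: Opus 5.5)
Your proposal is correct and follows essentially the same route as the paper: apply Lemma~\ref{lem:projection_residual} term by term, discard the $U$-independent sum, rewrite the remaining term as $|\mathcal{N}_k(b,i)|\,\operatorname{tr}(U^\top\Sigma_{b,i}^{(k)}U)$, and conclude with Lemma~\ref{lem:ky_fan}. Your remarks that $n>0$ and that uniqueness is not claimed (e.g.\ when $\lambda_p=\lambda_{p+1}$) are accurate.
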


\begin{proof}
Let $U\in\mathbb{R}^{r\times p}$ satisfy $U^\top U=I_p$, and set
$P_U=UU^\top$. By Lemma~\ref{lem:projection_residual}, for each
$(b',j)\in\mathcal{N}_k(b,i)$,
\[
\left\|
y_{b',j}^{(b,i,k)} - UU^\top y_{b',j}^{(b,i,k)}
\right\|_2^2
=
\|y_{b',j}^{(b,i,k)}\|_2^2
-
\|U^\top y_{b',j}^{(b,i,k)}\|_2^2 .
\]
Summing over $(b',j)\in\mathcal{N}_k(b,i)$ gives
\[
\sum_{(b',j)\in\mathcal{N}_k(b,i)}
\left\|
y_{b',j}^{(b,i,k)} - UU^\top y_{b',j}^{(b,i,k)}
\right\|_2^2
=
\sum_{(b',j)\in\mathcal{N}_k(b,i)}
\|y_{b',j}^{(b,i,k)}\|_2^2
-
\sum_{(b',j)\in\mathcal{N}_k(b,i)}
\|U^\top y_{b',j}^{(b,i,k)}\|_2^2 .
\]
The first term is independent of $U$. We now rewrite the second term in terms
of the local covariance matrix. For each $(b',j)$,
\[
\|U^\top y_{b',j}^{(b,i,k)}\|_2^2
=
\operatorname{tr}
\left(
(U^\top y_{b',j}^{(b,i,k)})(U^\top y_{b',j}^{(b,i,k)})^\top
\right).
\]
Using
\[
(U^\top y_{b',j}^{(b,i,k)})(U^\top y_{b',j}^{(b,i,k)})^\top
=
U^\top y_{b',j}^{(b,i,k)}
\bigl(y_{b',j}^{(b,i,k)}\bigr)^\top U,
\]
we obtain
\[
\|U^\top y_{b',j}^{(b,i,k)}\|_2^2
=
\operatorname{tr}
\left(
U^\top y_{b',j}^{(b,i,k)}
\bigl(y_{b',j}^{(b,i,k)}\bigr)^\top U
\right).
\]
Therefore,
\[
\begin{aligned}
\sum_{(b',j)\in\mathcal{N}_k(b,i)}
\|U^\top y_{b',j}^{(b,i,k)}\|_2^2
&=
\sum_{(b',j)\in\mathcal{N}_k(b,i)}
\operatorname{tr}
\left(
U^\top y_{b',j}^{(b,i,k)}
\bigl(y_{b',j}^{(b,i,k)}\bigr)^\top U
\right)  \\
&=
\operatorname{tr}
\left(
U^\top
\left[
\sum_{(b',j)\in\mathcal{N}_k(b,i)}
y_{b',j}^{(b,i,k)}
\bigl(y_{b',j}^{(b,i,k)}\bigr)^\top
\right]
U
\right).
\end{aligned}
\]
By the definition of $\Sigma_{b,i}^{(k)}$,
\[
\sum_{(b',j)\in\mathcal{N}_k(b,i)}
y_{b',j}^{(b,i,k)}
\bigl(y_{b',j}^{(b,i,k)}\bigr)^\top
=
|\mathcal{N}_k(b,i)|\Sigma_{b,i}^{(k)}.
\]
Hence
\[
\sum_{(b',j)\in\mathcal{N}_k(b,i)}
\|U^\top y_{b',j}^{(b,i,k)}\|_2^2
=
|\mathcal{N}_k(b,i)|
\operatorname{tr}
\bigl(U^\top \Sigma_{b,i}^{(k)}U\bigr).
\]

Thus the objective in~\eqref{eq:local_subspace_estimation} can be written as
\[
\sum_{(b',j)\in\mathcal{N}_k(b,i)}
\left\|
y_{b',j}^{(b,i,k)} - UU^\top y_{b',j}^{(b,i,k)}
\right\|_2^2
=
C_{b,i}
-
|\mathcal{N}_k(b,i)|
\operatorname{tr}
\bigl(U^\top \Sigma_{b,i}^{(k)}U\bigr),
\]
where
\[
C_{b,i}=
\sum_{(b',j)\in\mathcal{N}_k(b,i)}
\|y_{b',j}^{(b,i,k)}\|_2^2
\]
does not depend on $U$. Therefore, minimizing the projection residual over
all $U$ with orthonormal columns is equivalent to maximizing
\[
\operatorname{tr}
\bigl(U^\top \Sigma_{b,i}^{(k)}U\bigr)
\]
over the same constraint set.

By Lemma~\ref{lem:ky_fan}, this maximum is attained when the columns of $U$
are orthonormal eigenvectors corresponding to the $p$ largest eigenvalues of
$\Sigma_{b,i}^{(k)}$. Therefore, one solution is
\[
U_{b,i}=[u_1\mid\cdots\mid u_p].
\]
The associated subspace is the column space of $U_{b,i}$, namely
\[
\mathcal{U}_{b,i}
=
\operatorname{span}(u_1,\ldots,u_p)
\in\Gr(r,p).
\]
\end{proof}

Since only the column space of $U_{b,i}$ is used, the local representation is
invariant under a change of orthonormal basis. If $R\in\mathrm{O}(p)$, then
$U_{b,i}$ and $U_{b,i}R$ represent the same point on $\Gr(r,p)$. A basis-independent
representative is the orthogonal projector
\begin{equation}
\label{eq:projector}
P_{b,i}
=
U_{b,i}U_{b,i}^\top
\in\mathbb{R}^{r\times r}.
\end{equation}
This projector satisfies
\[
P_{b,i}^2=P_{b,i},
\qquad
P_{b,i}^\top=P_{b,i},
\qquad
\operatorname{rank}(P_{b,i})=p,
\]
and is unchanged under replacement $U_{b,i}\mapsto U_{b,i}R$.

For two subspaces $\mathcal{U}_{b,i}$ and $\mathcal{U}_{c,j}$, let
\(\theta_1,\ldots,\theta_p\) denote their principal angles. In numerical
experiments, GrassST compares subspaces using the chordal distance
\[
d_{\mathrm{ch}}(\mathcal{U}_{b,i},\mathcal{U}_{c,j})
=
\left(
\sum_{\ell=1}^{p}\sin^2\theta_\ell
\right)^{1/2}.
\]
Equivalently, in terms of the corresponding projectors,
\[
d_{\mathrm{ch}}(\mathcal{U}_{b,i},\mathcal{U}_{c,j})
=
\frac{1}{\sqrt{2}}
\|P_{b,i}-P_{c,j}\|_F .
\]
Thus, the distance is independent of the particular orthonormal bases used to
represent the two subspaces. The projector representation will also be used in
Section~\ref{subsec:perturbation} to quantify the sensitivity of
$\mathcal{U}_{b,i}$ to perturbations of $\Sigma_{b,i}^{(k)}$.

\subsection{Stability of the Subspace Representation}
\label{subsec:perturbation}

The local Grassmannian representation assigned to spot $(b,i)$ is obtained
from the leading eigenspace of the centered local covariance matrix
$\Sigma_{b,i}^{(k)}$. Perturbations in the embedded expression profiles
therefore induce perturbations in $\Sigma_{b,i}^{(k)}$, which may change the
corresponding subspace $\mathcal{U}_{b,i}\in\Gr(r,p)$. The stability of this
construction is controlled by the separation between the retained and
discarded eigenvalues.

Let
\[
\lambda_1(\Sigma)\geq \lambda_2(\Sigma)\geq \cdots \geq \lambda_r(\Sigma)
\]
denote the eigenvalues of a symmetric positive semidefinite matrix $\Sigma$.
For a target rank $p$, define the eigengap
\[
\delta_p(\Sigma)
=
\lambda_p(\Sigma)-\lambda_{p+1}(\Sigma).
\]
The following result is a direct consequence of the Davis--Kahan
$\sin\Theta$ theorem.

\begin{proposition}[Eigenspace perturbation]
\label{prop:dk}
Let $\Sigma,\widetilde{\Sigma}\in\mathbb{R}^{r\times r}$ be symmetric
positive semidefinite matrices, and let
$\mathcal{U},\widetilde{\mathcal{U}}\in\Gr(r,p)$ denote their leading
$p$-dimensional invariant subspaces. Suppose that
\[
\delta_p(\Sigma)>0
\]
and that the perturbation is small enough so that the leading
$p$-dimensional invariant subspace of $\widetilde{\Sigma}$ remains separated
from its complement. Then
\[
d_{\mathrm{ch}}(\mathcal{U},\widetilde{\mathcal{U}})
\leq
C
\frac{\|\Sigma-\widetilde{\Sigma}\|_F}{\delta_p(\Sigma)}
\]
for a universal constant $C>0$.
\end{proposition}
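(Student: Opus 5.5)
I would prove Proposition~\ref{prop:dk} by reducing it to the Yu--Wang--Samworth variant of the Davis--Kahan $\sin\Theta$ theorem. That variant only needs the eigengap of the unperturbed matrix $\Sigma$, which is exactly the form of the stated bound. To avoid relying on that variant as a black box, I would derive it in three steps.

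\textbf{Step 1: reduce the distance to a block of the perturbed projector.} Let $U=[u_1\mid\cdots\mid u_p]$ and $\widetilde U$ be orthonormal bases of the leading invariant subspaces, with projectors $P=UU^\top$ and $\widetilde P=\widetilde U\widetilde U^\top$. Let $U_\perp$ complete $U$ to an orthonormal basis of $\mathbb{R}^r$. Using the principal-angle characterization from Section~\ref{subsec:distance}, I would show
\[
d_{\mathrm{ch}}^2(\mathcal U,\widetilde{\mathcal U})=\sum_{\ell=1}^p\sin^2\theta_\ell=\|U_\perp^\top\widetilde U\|_F^2 .
\]
The reason is that $\|U^\top\widetilde U\|_F^2=\sum_\ell\cos^2\theta_\ell$ and $\|U^\top\widetilde U\|_F^2+\|U_\perp^\top\widetilde U\|_F^2=p$. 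So it suffices to bound $\|U_\perp^\top\widetilde U\|_F$.

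\textbf{Step 2: the Sylvester equation.} Write $\widetilde\Sigma\widetilde U=\widetilde U\widetilde\Lambda$, where $\widetilde\Lambda=\operatorname{diag}(\tilde\lambda_1,\dots,\tilde\lambda_p)$, and $U_\perp^\top\Sigma=\Lambda_\perp U_\perp^\top$, where $\Lambda_\perp=\operatorname{diag}(\lambda_{p+1},\dots,\lambda_r)$. Multiplying $(\widetilde\Sigma-\Sigma)\widetilde U$ on the left by $U_\perp^\top$ gives
\[
U_\perp^\top\widetilde U\,\widetilde\Lambda-\Lambda_\perp U_\perp^\top\widetilde U=U_\perp^\top(\widetilde\Sigma-\Sigma)\widetilde U .
\]
Entrywise this reads $(\tilde\lambda_a-\lambda_{p+b})X_{ba}=E_{ba}$, where $X=U_\perp^\top\widetilde U$ and $E$ is the right-hand side. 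Hence
\[
\|X\|_F\le\frac{\|E\|_F}{\min_{a,b}|\tilde\lambda_a-\lambda_{p+b}|}\le\frac{\|\Sigma-\widetilde\Sigma\|_F}{\tilde\lambda_p-\lambda_{p+1}} .
\]
This step needs $\tilde\lambda_p>\lambda_{p+1}$, which is precisely what the separation hypothesis in the statement guarantees.

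\textbf{Step 3: replace the mixed gap by $\delta_p(\Sigma)$.} This is the main obstacle, and I would split into two cases according to $\|\Sigma-\widetilde\Sigma\|_F$.

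\emph{Small perturbation, $\|\Sigma-\widetilde\Sigma\|_F\le\delta_p(\Sigma)/2$.} Weyl's inequality gives $\tilde\lambda_p\ge\lambda_p-\|\Sigma-\widetilde\Sigma\|_2$. Therefore $\tilde\lambda_p-\lambda_{p+1}\ge\delta_p(\Sigma)/2$, and Step 2 yields the bound with $C=2$.

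\emph{Large perturbation, $\|\Sigma-\widetilde\Sigma\|_F>\delta_p(\Sigma)/2$.} Here I would use the trivial bound $d_{\mathrm{ch}}\le\sqrt p$, since each $\sin\theta_\ell\le1$. This gives $d_{\mathrm{ch}}\le 2\sqrt p\,\|\Sigma-\widetilde\Sigma\|_F/\delta_p(\Sigma)$.

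The constant in the second case depends on $p$, so it does not yet give a universal constant. To remove that dependence I would try the Yu--Wang--Samworth trick instead: bound $\sin\Theta$ through $\|\widetilde\Sigma-\Sigma\|$ applied on the $U$ side, using the complementary Sylvester equation $\Lambda U^\top\widetilde U_\perp-U^\top\widetilde U_\perp\widetilde\Lambda_\perp=\dots$. Comparing $\lambda_p$ with $\tilde\lambda_{p+1}\le\lambda_{p+1}+\|\Sigma-\widetilde\Sigma\|_2$ leads to the constant $2$ in the Frobenius form.

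If that route fails, I would instead read the statement's hypothesis "perturbation small enough" as $\|\Sigma-\widetilde\Sigma\|_F\le\delta_p(\Sigma)/2$. Under that reading the first case alone gives $C=2$, which matches the paper's qualitative formulation of the result.
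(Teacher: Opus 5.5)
Your Steps 1 and 2 and the small-perturbation half of Step 3 are correct. As written, though, the argument only gives $C=2\sqrt p$ in general, and the repair you propose does not remove the $p$-dependence. The complementary Sylvester equation for $Y=U^\top\widetilde U_\perp$ has entrywise gaps $\lambda_a-\tilde\lambda_{p+b}$. It is therefore governed by the mixed gap $\lambda_p-\tilde\lambda_{p+1}\ge\delta_p(\Sigma)-\|\Sigma-\widetilde\Sigma\|_2$. This is the mirror image of Step 2 and collapses in exactly the same regime, once $\|\Sigma-\widetilde\Sigma\|_2$ is comparable to $\delta_p(\Sigma)$, leaving you again with the trivial bound $\sqrt p$. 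Also, the hypothesis that $\widetilde{\mathcal U}$ is separated from its complement most naturally means $\tilde\lambda_p>\tilde\lambda_{p+1}$, not the mixed gap $\tilde\lambda_p>\lambda_{p+1}$ you attribute to it. Your fallback, reading ``small enough'' as $\|\Sigma-\widetilde\Sigma\|_F\le\delta_p(\Sigma)/2$, proves the result only under a stronger hypothesis than stated. It is also unnecessary: the paper's proof simply invokes the Davis--Kahan $\sin\Theta$ theorem, citing the Yu--Wang--Samworth version, which holds for every perturbation. It then uses the same identity $d_{\mathrm{ch}}=\|\sin\Theta\|_F$ as your Step 1.

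The missing idea is to keep the \emph{unperturbed} eigenvalues in the Sylvester equation and pay for the substitution with the Hoffman--Wielandt (Mirsky) inequality. With $\Lambda=\operatorname{diag}(\lambda_1,\dots,\lambda_p)$ and $\Delta=\widetilde\Sigma-\Sigma$, one has $\Sigma\widetilde U-\widetilde U\Lambda=-\Delta\widetilde U+\widetilde U(\widetilde\Lambda-\Lambda)$, hence
\[
\Lambda_\perp X-X\Lambda=U_\perp^\top\bigl(-\Delta\widetilde U+\widetilde U(\widetilde\Lambda-\Lambda)\bigr),\qquad X=U_\perp^\top\widetilde U .
\]
Every entrywise gap is now $\lambda_a-\lambda_{p+b}\ge\delta_p(\Sigma)$, a gap of $\Sigma$ alone. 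Meanwhile $\|\widetilde\Lambda-\Lambda\|_F\le\|\Delta\|_F$, because $\sum_\ell(\tilde\lambda_\ell-\lambda_\ell)^2\le\|\Delta\|_F^2$ for symmetric matrices with eigenvalues sorted in the same order. This gives $\delta_p(\Sigma)\|X\|_F\le 2\|\Delta\|_F$, i.e.\ $C=2$, with no case split and no condition on $\widetilde\Sigma$.

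An equally short alternative uses the paper's Ky Fan lemma. It gives $\langle\widetilde\Sigma,\widetilde P-P\rangle\ge 0$. The same eigenbasis computation behind it gives the quantitative bound $\langle\Sigma,P-\widetilde P\rangle\ge\tfrac{\delta_p}{2}\|P-\widetilde P\|_F^2$. Adding the two and applying Cauchy--Schwarz yields $\|P-\widetilde P\|_F\le 2\|\Delta\|_F/\delta_p$, so $d_{\mathrm{ch}}\le\sqrt2\,\|\Delta\|_F/\delta_p$.
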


\begin{proof}
Let $U$ and $\widetilde U$ be orthonormal bases for
$\mathcal{U}$ and $\widetilde{\mathcal{U}}$, respectively. Let
\[
\Theta(\mathcal{U},\widetilde{\mathcal{U}})
=
\operatorname{diag}(\theta_1,\ldots,\theta_p)
\]
denote the principal-angle matrix between the two subspaces. The
Davis--Kahan $\sin\Theta$ theorem gives
\[
\|\sin\Theta(\mathcal{U},\widetilde{\mathcal{U}})\|_F
\leq
C
\frac{\|\Sigma-\widetilde{\Sigma}\|_F}{\delta_p(\Sigma)}
\]
for a universal constant $C>0$, under the stated eigenvalue separation
condition. By the definition of the chordal distance,
\[
d_{\mathrm{ch}}(\mathcal{U},\widetilde{\mathcal{U}})
=
\left(
\sum_{\ell=1}^{p}\sin^2\theta_\ell
\right)^{1/2}
=
\|\sin\Theta(\mathcal{U},\widetilde{\mathcal{U}})\|_F .
\]
Combining the two identities gives the stated bound
\cite{DavisKahan1970,StewartSun1990,YuWangSamworth2015}.
\end{proof}

Applying Proposition~\ref{prop:dk} to the local covariance matrices used in
GrassST gives the following corollary.

\begin{corollary}[Subspace stability]
\label{cor:local_stability}
Let $\Sigma_{b,i}^{(k)}$ and $\widetilde{\Sigma}_{b,i}^{(k)}$ be two
centered local covariance matrices associated with spot $(b,i)$ and
neighborhood scale $k$. Let
$
\mathcal{U}_{b,i},\widetilde{\mathcal{U}}_{b,i}\in\Gr(r,p)
$
be their leading $p$-dimensional subspaces. If
$
\delta_p(\Sigma_{b,i}^{(k)})>0
$
and the corresponding perturbed eigenspace remains separated, then
\[
d_{\mathrm{ch}}(\mathcal{U}_{b,i},\widetilde{\mathcal{U}}_{b,i})
\leq
C
\frac{
\|\Sigma_{b,i}^{(k)}-\widetilde{\Sigma}_{b,i}^{(k)}\|_F
}{
\delta_p(\Sigma_{b,i}^{(k)})
}.
\]
\end{corollary}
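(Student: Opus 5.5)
The plan is to obtain Corollary~\ref{cor:local_stability} as a direct specialization of Proposition~\ref{prop:dk}. To do so, we only need to check that the local covariance matrices meet the hypotheses of that proposition.

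First, we verify that $\Sigma_{b,i}^{(k)}$ and $\widetilde{\Sigma}_{b,i}^{(k)}$ are symmetric positive semidefinite $r\times r$ matrices. By definition~\eqref{eq:local_cov}, $\Sigma_{b,i}^{(k)}$ is an average of rank-one outer products $y y^\top$, so it is symmetric. For any $v\in\mathbb{R}^r$,
\[
v^\top \Sigma_{b,i}^{(k)} v=\frac{1}{|\mathcal{N}_k(b,i)|}\sum_{(b',j)\in\mathcal{N}_k(b,i)} \bigl(v^\top y_{b',j}^{(b,i,k)}\bigr)^2\geq 0,
\]
so it is also positive semidefinite. The perturbed matrix $\widetilde{\Sigma}_{b,i}^{(k)}$ is, by hypothesis, also a centered local covariance matrix, computed from perturbed embedded profiles. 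It therefore has the same form and the same properties.

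Second, we identify $\mathcal{U}_{b,i}$ and $\widetilde{\mathcal{U}}_{b,i}$ with the leading $p$-dimensional invariant subspaces in Proposition~\ref{prop:dk}. By Proposition~\ref{prop:pca_solution}, the GrassST subspace is spanned by the top $p$ eigenvectors of $\Sigma_{b,i}^{(k)}$, which is exactly the leading invariant subspace. The assumption $\delta_p(\Sigma_{b,i}^{(k)})>0$ guarantees that this subspace is uniquely defined, independent of how eigenvectors are chosen within a repeated eigenvalue. The same holds for the perturbed matrix under the stated separation assumption.

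Third, we substitute $\Sigma=\Sigma_{b,i}^{(k)}$ and $\widetilde{\Sigma}=\widetilde{\Sigma}_{b,i}^{(k)}$ into Proposition~\ref{prop:dk}. This gives the bound immediately, with the same universal constant $C$.

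The only delicate point is interpreting the separation hypothesis for the perturbed eigenspace consistently with Proposition~\ref{prop:dk}. If this needs to be made explicit, we would use the Yu--Wang--Samworth form of the Davis--Kahan theorem, which requires only $\delta_p(\Sigma)>0$ and yields
\[
\|\sin\Theta\|_F\leq 2\sqrt{p}\,\|\Sigma-\widetilde{\Sigma}\|_{2}/\delta_p \leq 2\sqrt{p}\,\|\Sigma-\widetilde{\Sigma}\|_F/\delta_p .
\]
In that version $C$ depends on $p$ rather than being strictly universal, so the constant would be stated accordingly.
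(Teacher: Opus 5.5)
Your proposal is correct and matches the paper, which obtains the corollary simply by applying Proposition~\ref{prop:dk} with $\Sigma=\Sigma_{b,i}^{(k)}$ and $\widetilde{\Sigma}=\widetilde{\Sigma}_{b,i}^{(k)}$. Your checks of symmetry and positive semidefiniteness, and your identification of $\mathcal{U}_{b,i}$ through Proposition~\ref{prop:pca_solution}, make explicit what the paper leaves implicit.

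One correction to your fallback remark: for the leading subspace, the Yu--Wang--Samworth bound is $\|\sin\Theta\|_F\leq 2\min\{\sqrt{p}\,\|\Sigma-\widetilde{\Sigma}\|_2,\|\Sigma-\widetilde{\Sigma}\|_F\}/\delta_p(\Sigma)$. Its Frobenius branch already gives the universal constant $C=2$, so you do not need to let $C$ depend on $p$.
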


Corollary~\ref{cor:local_stability} shows that the perturbation of a local
Grassmannian representation is controlled by two quantities: the magnitude of
the perturbation in the local covariance matrix and the eigengap separating
the retained and discarded directions. A larger eigengap yields a more stable
subspace, whereas a small eigengap makes the leading subspace more
sensitive to perturbations. Since
\[
d_{\mathrm{ch}}(\mathcal{U},\widetilde{\mathcal{U}})
\leq
\sqrt{p},
\]
the representation remains well-defined even when the eigengap is small,
although the perturbation bound becomes less informative in that regime.

Fig.~\ref{fig:subspace_stability} provides a numerical illustration of the
subspace perturbation bound using DLPFC slice 151673
\cite{Maynard2021DLPFC}, including a comparison between the subspace
representation and the PCA embedding of the center spot.

\begin{figure}[hpt!]
\centering
 \makebox[\textwidth][c]{
\includegraphics[width=0.85\textwidth]{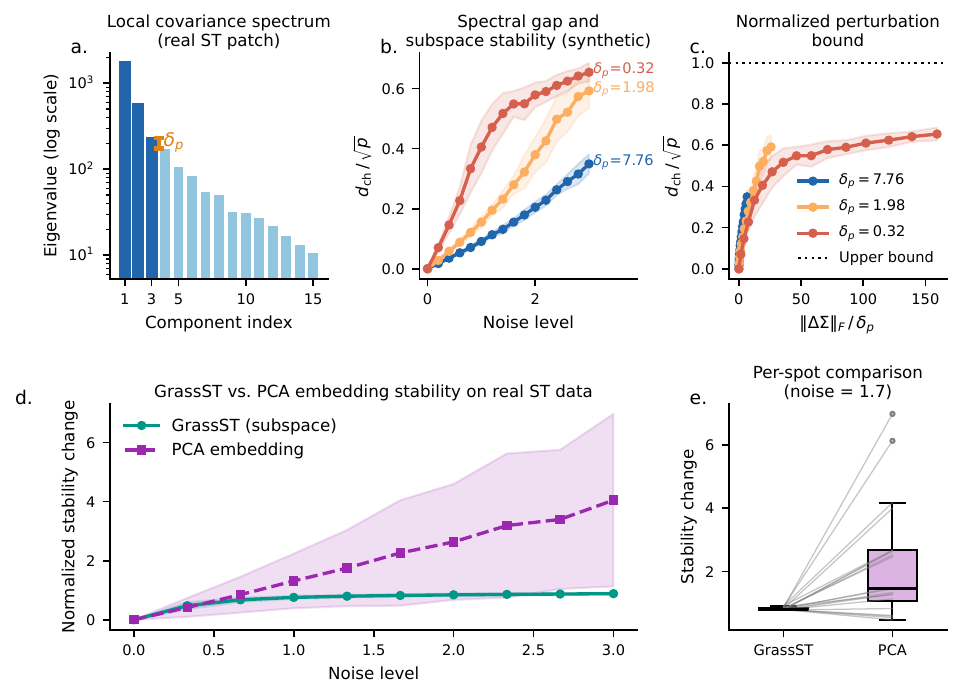}
}
\caption{
Grassmann manifold subspace stability under data perturbation.
(a) Eigenvalue spectrum of the centered local covariance matrix
$\Sigma_i^{(k)}$ estimated from a representative neighborhood of 30 spatially
adjacent spots in the 30-dimensional feature space of the DLPFC dataset
(slice 151673). The top $p=3$ eigenvalues (dark blue) define the retained
subspace. The spectral gap
$\delta_p=\lambda_p-\lambda_{p+1}$ (orange bracket) quantifies the separation
between the retained and discarded eigendirections on a log scale.
(b) Effect of spectral gap magnitude on subspace stability. Synthetic Gaussian
data were generated from covariance matrices with prescribed spectral gaps
($\delta_p\in\{0.32,1.98,7.76\}$; $n=500$ samples, $r=30$, $p=3$). Additive
Gaussian noise of increasing magnitude was applied, and the normalized chordal
distance
$d_{\mathrm{ch}}(\mathcal{U}_0,\widetilde{\mathcal{U}})/\sqrt{p}$
was recorded (mean $\pm$ s.d.; 50 replicates).
(c) Empirical validation of the perturbation scaling in
Corollary~\ref{cor:local_stability}. Results from (b) are re-plotted with the
horizontal axis normalized by $\delta_p$. The curves show similar scaling after
normalization and remain bounded by
$d_{\mathrm{ch}}/\sqrt{p}\leq 1$ (dotted line), consistent with the bound
$d_{\mathrm{ch}}\leq C\|\Delta\Sigma\|_F/\delta_p$.
(d) Comparison of subspace stability and center-spot embedding stability
on real spatial transcriptomics data. For 20 randomly selected spots in the
DLPFC dataset, Gaussian noise of increasing magnitude was added to each local
neighborhood. GrassST stability is measured by
$d_{\mathrm{ch}}/\sqrt{p}$, while embedding stability is measured by the
relative change
$\|z_0-\widetilde z\|_2/\|z_0\|_2$
of the center-spot embedding. Shaded regions denote $\pm$ s.d. over spots
with 30 replicates per spot.
(e) Per-spot comparison at noise level $1.7$. Box plots and connected lines
show the two stability metrics across all 20 spots.
}
\label{fig:subspace_stability}
\end{figure}

\subsection{Data-Adaptive Selection of Neighborhood Scale and Subspace Rank}
\label{subsec:adaptive}

The Grassmann manifold representation depends on two structural parameters:
the neighborhood scale $k$ and the subspace rank $p$. The parameter $k$
determines the spatial extent of the local neighborhood used to estimate
$\Sigma_{b,i}^{(k)}$, while $p$ specifies the dimension of the subspace
assigned to each spot. Thus, each spot is represented as a point on
$\Gr(r,p)$.

These parameters control the resolution of the local representation. A small
neighborhood may contain too few spots to summarize local structure reliably,
whereas a large neighborhood may aggregate spots from distinct tissue regions.
Similarly, a small rank may discard relevant directions of local variation,
whereas a large rank may include weak directions that contribute little to the
dominant neighborhood structure. Therefore, using a fixed pair $(k,p)$ across
all datasets may not be appropriate when tissue organization and spatial
resolution vary across datasets.

Rather than fixing $(k,p)$ globally, we select both parameters from the
observed local spectra. The selection procedure uses cumulative spectral
energy as the primary criterion for determining how much local variation is
retained by each candidate pair.

\subsubsection{Local Spectral Energy and Dataset-Level Averaging}

Let $\mathcal{K}$ denote the candidate set of neighborhood scales and
$\mathcal{P}$ the candidate set of subspace ranks. For spot $(b,i)$ and
candidate scale $k\in\mathcal{K}$, let
\[
\lambda_1^{(b,i,k)} \geq \lambda_2^{(b,i,k)} \geq \cdots \geq 0
\]
denote the eigenvalue spectrum obtained from the centered local neighborhood
features. In the implementation, these eigenvalues are computed from the
squared singular values of the centered local patch matrix. The cumulative
spectral energy at rank $p\in\mathcal{P}$ is defined as
\[
E_p^{(b,i,k)}
=
\frac{
\sum_{\ell=1}^{p} \lambda_\ell^{(b,i,k)}
}{
\sum_{\ell} \lambda_\ell^{(b,i,k)}
}.
\]
This quantity measures the fraction of local variation captured by a
$p$-dimensional subspace at scale $k$.

For each tissue section $b$, we first average over spots,
\[
\bar E_p^{(b)}(k)
=
\frac{1}{n_b}
\sum_{i=1}^{n_b}
E_p^{(b,i,k)} .
\]
The dataset-level summary is then computed by averaging across sections:
\[
\bar E_p(k)
=
\frac{1}{B}
\sum_{b=1}^{B}
\bar E_p^{(b)}(k).
\]
This section-level averaging gives each tissue section equal weight in the
parameter selection procedure.

\subsubsection{Admissible Scales and Rank Selection}

Let
\[
k_{\min}=\min\mathcal{K},
\qquad
k_{\max}=\max\mathcal{K},
\qquad
p_{\min}=\min\mathcal{P},
\qquad
p_{\max}=\max\mathcal{P}.
\]
The selection procedure uses three thresholds and a reference rank
$p_0\in\mathcal{P}$. The threshold $\tau_{\mathrm{patch}}$ is used to identify
admissible neighborhood scales through
$\bar E_{p_0}(k)\geq \tau_{\mathrm{patch}}$. Given the selected scale $k^*$,
$\tau_{\mathrm{rank}}$ selects the smallest candidate rank satisfying
$\bar E_p(k^*)\geq \tau_{\mathrm{rank}}$. The threshold
$\tau_{\mathrm{low}}$ detects the low-rank concentration regime, where the
minimum candidate rank at the finest candidate scale already captures
sufficient local spectral energy. Numerical values are provided in Section S1
of the Supporting Information.

The admissible set of patch sizes is defined by
\[
\mathcal{K}_{\mathrm{adm}}
=
\left\{
k\in\mathcal{K}:
\bar E_{p_0}(k)\geq \tau_{\mathrm{patch}}
\right\}.
\]
If no candidate scale satisfies this condition, all candidate patch sizes are
retained for the subsequent scoring step.

For each retained candidate scale, we compute
\[
S_{\mathrm{patch}}(k)
=
\bar E_{p_0}(k)
+
\alpha
\left(
1-
\frac{|k-k_{\mathrm{mid}}|}{k_{\max}-k_{\min}}
\right),
\]
where
\[
k_{\mathrm{mid}}=\frac{k_{\min}+k_{\max}}{2},
\qquad
\alpha\geq 0.
\]
The second term is used only to break ties or near-ties by mildly favoring
scales away from the extremes of the candidate range. The selected patch size
is
\[
k^*
=
\arg\max_{k\in\mathcal{K}_{\mathrm{adm}}}
S_{\mathrm{patch}}(k),
\]
with the convention that \(\mathcal{K}_{\mathrm{adm}}\) is replaced by
\(\mathcal{K}\) if the admissible set is empty.

Given \(k^*\), the selected subspace rank is the smallest candidate rank whose
dataset-level cumulative spectral energy reaches the rank threshold:
\[
p^*
=
\min
\left\{
p\in\mathcal{P}:
\bar E_p(k^*)\geq \tau_{\mathrm{rank}}
\right\}.
\]
If no candidate rank satisfies this condition, the largest candidate rank
\(p_{\max}\) is selected.

\subsubsection{Concentrated Low-Rank Regime}
\label{subsubsec:lowrank}

Some datasets have local spectra that are already strongly concentrated at the
finest candidate scale and the minimum candidate rank. In this case, the
standard score-based selection is bypassed by a low-rank override.

\begin{definition}[Concentrated low-rank regime]
\label{def:low_rank}
A dataset is said to be in the concentrated low-rank regime if
\[
\bar E_{p_{\min}}(k_{\min}) \geq \tau_{\mathrm{low}}
\]
and
\[
\bar E_{p_{\min}}(k_{\min})
=
\max_{k\in\mathcal{K}}
\bar E_{p_{\min}}(k).
\]
\end{definition}

The first condition requires that the minimum-rank subspace at the finest
candidate scale already captures at least \(\tau_{\mathrm{low}}\) of the
average local spectral energy. The second condition requires that this
low-rank spectral concentration is not improved by increasing the neighborhood
scale. Together, these conditions identify datasets for which the finest
candidate scale and minimum candidate rank already provide a sufficient
low-dimensional local summary under the energy criterion.

In this regime, the selected parameters are set directly to
\[
(k^*,p^*)=(k_{\min},p_{\min}).
\]
Otherwise, the procedure follows the admissible-scale and rank-threshold rules
described above. The complete selection procedure is summarized in
Algorithm~\ref{alg:adaptive}.

\begin{remark}
The reference rank $p_0$ and the balancing parameter $\alpha$ are used only in
the patch-size selection step. The reference rank $p_0$ provides a fixed
low-dimensional summary of the local spectra across candidate patch sizes
through $\bar E_{p_0}(k)$; it is not the final selected subspace rank. After
the patch size $k^*$ is determined, the final rank $p^*$ is selected
separately using the rank threshold $\tau_{\mathrm{rank}}$. The parameter
$\alpha$ controls a mild preference for non-extreme patch sizes among
candidate scales with similar cumulative spectral energy. In all experiments,
we use the fixed values $p_0=3$ and $\alpha=0.3$ for every dataset.
\end{remark}

\begin{algorithm}[hbpt!]
\caption{Adaptive selection of neighborhood scale and subspace rank}
\label{alg:adaptive}
\hrule\vspace{0.3em}
\begin{algorithmic}[1]
\Require Candidate scales $\mathcal{K}$, candidate ranks $\mathcal{P}$, reference rank $p_0$, thresholds $\tau_{\mathrm{patch}}, \tau_{\mathrm{rank}}, \tau_{\mathrm{low}}$, balancing parameter $\alpha$
\State Set $k_{\min}=\min\mathcal{K}$, $k_{\max}=\max\mathcal{K}$, $p_{\min}=\min\mathcal{P}$, and $p_{\max}=\max\mathcal{P}$.
\For{each $k \in \mathcal{K}$}
    \State Construct $k$-nearest spatial neighborhoods.
    \State Compute $\bar{E}_p(k)$ for all $p \in \mathcal{P}$.
\EndFor
\If{$\bar{E}_{p_{\min}}(k_{\min}) \geq \tau_{\mathrm{low}}$ \textbf{and} $\bar{E}_{p_{\min}}(k_{\min}) = \max_{k\in\mathcal{K}} \bar{E}_{p_{\min}}(k)$}
    \State Set $k^* = k_{\min}$ and $p^* = p_{\min}$.
\Else
    \State Define $\mathcal{K}_{\mathrm{adm}}=\{k\in\mathcal{K}: \bar{E}_{p_0}(k)\geq \tau_{\mathrm{patch}}\}$.
    \If{$\mathcal{K}_{\mathrm{adm}}=\emptyset$}
        \State Set $\mathcal{K}_{\mathrm{adm}}=\mathcal{K}$.
    \EndIf
    \State Compute $S_{\mathrm{patch}}(k)$ for all $k\in\mathcal{K}_{\mathrm{adm}}$.
    \State Set $k^* \gets \arg\max_{k \in \mathcal{K}_{\mathrm{adm}}} S_{\mathrm{patch}}(k)$.
    \If{there exists $p\in\mathcal{P}$ such that $\bar{E}_p(k^*) \geq \tau_{\mathrm{rank}}$}
        \State Set $p^* \gets \min\{p\in\mathcal{P}: \bar{E}_p(k^*) \geq \tau_{\mathrm{rank}}\}$.
    \Else
        \State Set $p^* \gets p_{\max}$.
    \EndIf
\EndIf
\State \Return $k^*$, $p^*$
\end{algorithmic}
\vspace{0.3em}\hrule
\end{algorithm}

\subsection{Algorithmic Summary and Complexity }
\label{subsec:algorithm_complexity}

Algorithm~\ref{alg:grassst_repr} summarizes the GrassST representation
procedure after the neighborhood scale $k$ and subspace rank $p$ have been
selected.

\begin{algorithm}[htbp]
\caption{GrassST framework}
\label{alg:grassst_repr}
\hrule\vspace{0.3em}
\begin{algorithmic}[1]
    \Require Spatial transcriptomics slices
    $\{(X^{(b)},S^{(b)})\}_{b=1}^{B}$; feature dimension $r$;
    subspace rank $p$; patch size $k$
    \Ensure Subspaces
    $\{\mathcal{U}_{b,i}:1\leq b\leq B,\;1\leq i\leq n_b\}\subset\Gr(r,p)$;
    optional Grassmann manifold distance output $D$

    \State Concatenate expression matrices and compute a shared PCA map
    $T:\mathbb{R}^d\to\mathbb{R}^r$.
    \State Compute embeddings
    $z_i^{(b)}=T(x_i^{(b)})$ for all spots $(b,i)$.
    \State Construct spatial neighborhoods
    $\mathcal{N}_k(b,i)$ from the coordinate matrices $\{S^{(b)}\}_{b=1}^{B}$.

    \For{each spot $(b,i)$}
        \State Center the neighborhood embeddings
        $\{z_{b',j}:(b',j)\in\mathcal{N}_k(b,i)\}$ to obtain
        $\{y_{b',j}^{(b,i,k)}\}$.
        \State Compute
        \[
        U_{b,i} \gets
        \arg\min_{\substack{U\in\mathbb{R}^{r\times p}\\ U^\top U=I_p}}
        \sum_{(b',j)\in\mathcal{N}_k(b,i)}
        \left\|
        y_{b',j}^{(b,i,k)}-UU^\top y_{b',j}^{(b,i,k)}
        \right\|_2^2 .
        \]
        \State Set $\mathcal{U}_{b,i}=\operatorname{span}(U_{b,i})$.
    \EndFor

    \If{a distance-based downstream task is used}
        \State Compute Grassmann manifold distances
        $d(\mathcal{U}_{b,i},\mathcal{U}_{c,j})$ as needed.
        \State Build a distance-based graph or distance matrix $D$.
    \EndIf

    \State Use the subspaces or the distance-based graph for downstream analysis.
    \State \Return $\{\mathcal{U}_{b,i}\}$, with optional distance output $D$.
\end{algorithmic}
\vspace{0.3em}\hrule
\end{algorithm}

We next discuss the computational cost of constructing the GrassST
representation. Let $N=\sum_{b=1}^{B}n_b$ be the total number of spatial spots,
$d$ the original feature dimension, $r$ the shared feature dimension, $p$ the
subspace rank, and $k$ the patch size. When truncated PCA is used to compute
the shared feature embedding, the approximate cost is $O(Ndr)$, using standard
matrix computation estimates \cite{GolubVanLoan2013}.

Spatial neighborhoods are constructed from two-dimensional coordinates. With a
standard nearest-neighbor data structure, this step has approximate cost
$O(N\log N)$, although the exact cost depends on the implementation and on the
distribution of spatial coordinates. For each indexed spot $(b,i)$, the
subspace is computed from the centered patch matrix
$Z_{\mathcal{N}_k(b,i)}\in\mathbb{R}^{k\times r}$. If the full singular value
decomposition of this local patch matrix is computed, the cost is
$O(\min\{kr^2,rk^2\})$. Since only the leading $p$ singular vectors are needed
to form the $p$-dimensional subspace
$\mathcal{U}_{b,i}=\operatorname{span}(U_{b,i})$, the cost for one local patch
is approximately $O(krp)$ \cite{GolubVanLoan2013}. Thus, the total cost for
estimating all subspaces is approximately $O(Nkrp)$. These $N$ subspace
estimation problems are independent and can be computed in parallel across
spatial spots.

Computing a full pairwise chordal distance matrix requires comparing every pair
of subspaces. For a pair $\mathcal{U}_{b,i}$ and $\mathcal{U}_{c,j}$, the
principal angles are obtained from the singular values of
$U_{b,i}^{\top}U_{c,j}\in\mathbb{R}^{p\times p}$. Forming this matrix has cost
$O(rp^2)$, and computing its singular values has cost $O(p^3)$. Therefore, the
full pairwise chordal distance matrix has cost $O(N^2(rp^2+p^3))$. This
quadratic cost arises only when all pairwise subspace distances are needed; the
GrassST representation itself does not require forming the full $N\times N$
distance matrix. For graph-based downstream analyses, it is often sufficient to
compute a sparse chordal-distance graph by retaining selected nearest
neighbors.

Overall, when $d$, $r$, $p$, and $k$ are treated as fixed, the shared embedding
and subspace estimation steps scale approximately linearly with the number of
spatial spots, while the full pairwise distance computation scales
quadratically in $N$ only if all pairwise Grassmann manifold distances are
required.

\section{Experiments}

We evaluate GrassST on multiple spatial transcriptomics datasets with diverse
tissue structures, sequencing platforms, and spatial resolutions. The
experiments assess whether Grassmann manifold representations provide
stable geometric structure for spatial transcriptomics alignment, while also
measuring clustering quality and cross-slice integration performance. GrassST
is compared with representative baseline methods under a unified preprocessing
and downstream evaluation pipeline.

\subsection{Experimental Setup}

All datasets were processed using a unified workflow. For each dataset, tissue
sections were loaded, filtered, aligned to a common gene set, and annotated
with section labels. Available tissue or domain annotations were used as
ground-truth labels for clustering evaluation. Detailed dataset descriptions,
preprocessing choices, GrassST parameter settings, baseline configurations,
clustering protocols, and evaluation metrics are provided in Section S1 of the
Supporting Information.

\subsection{Performance Comparison}

We first evaluate whether the proposed GrassST framework improves spatial domain identification and cross-slice integration across datasets with different tissue structures and spatial resolutions. The main quantitative comparison is summarized in Fig.~\ref{fig:main_metrics}, which reports clustering accuracy, integration quality, and per-section
performance variability across all datasets and methods.
Overall, GrassST shows strong clustering and integration performance on
Barista and DLPFC, and remains competitive on HER2 and MERFISH. These results
indicate that local Grassmann manifold representations can support spatial
domain identification and cross-slice comparison without using external
spatial alignment procedures.

\begin{figure}[hpb!]
\centering
\includegraphics[width=1\textwidth]{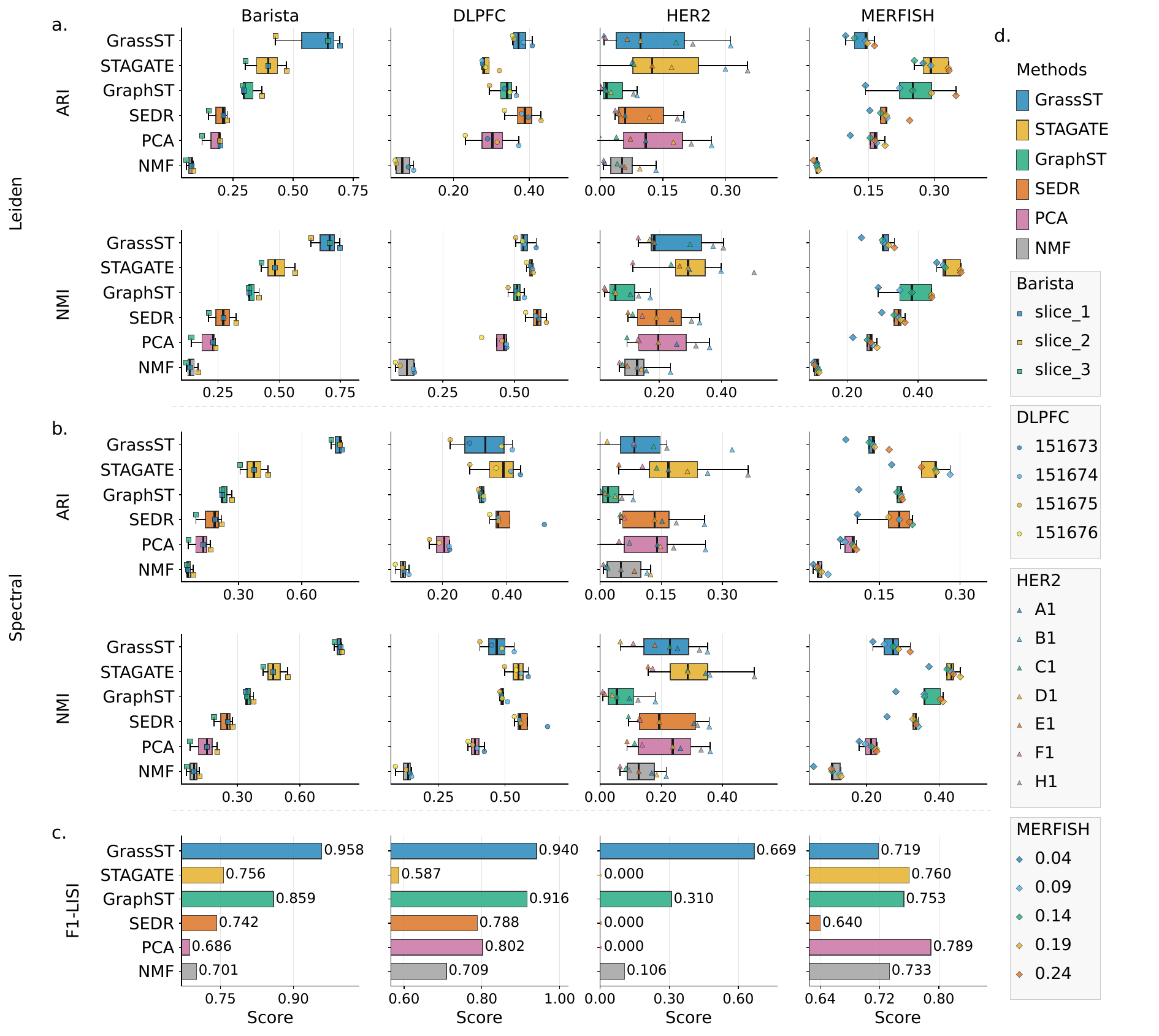}
\caption{
Performance comparison of spatial transcriptomics integration methods using the proposed adaptive GrassST framework.
a. ARI and NMI obtained from Leiden clustering. GrassST consistently achieves strong clustering performance across datasets, outperforming or matching state-of-the-art methods on Baristaand DLPFC, while remaining competitive on HER2 and MERFISH. The results indicate that the proposed local Grassmann manifold representation preserves discriminative structure within each slice.
b. ARI and NMI obtained from spectral clustering based on the Grassmann Manifold chordal distance graph. Compared to a., the relative performance of GrassST remains stable, demonstrating that the learned Grassmann manifold representations are robust to the choice of downstream clustering method. This consistency suggests that the improvement comes from the representation itself rather than a specific clustering algorithm.
c. Median F1-LISI score measuring cross-slice integration quality. GrassST shows substantial improvement over competing methods, particularly on MERFISH, where integration is more challenging due to high spatial resolution and heterogeneity. This demonstrates that the adaptive selection of patch size and subspace dimension effectively captures appropriate local geometry and intrinsic dimensionality across datasets.
d. Legends for methods, datasets, and slice-level identifiers. Different markers correspond to individual slices or sections within each dataset, highlighting robustness to inter-slice variability.
Overall, GrassST achieves a favorable balance between clustering accuracy and integration quality. The results validate that adaptive local modeling on Grassmann manifolds improves generalization across datasets with diverse spatial structures without requiring manual parameter tuning.
}
\label{fig:main_metrics}
\end{figure}

To further assess whether improved quantitative performance corresponds to meaningful embedding structures, we visualize batch alignment and domain preservation using UMAP embeddings in Fig.~\ref{fig:umap}. Across the four datasets, GrassST produces embeddings with clear domain organization while maintaining substantial mixing of slice identities. This behavior indicates that the proposed representation does not simply remove slice-specific variation, but preserves biologically relevant spatial domain structure during integration.

\begin{figure}[hpbt!]
\centering
\includegraphics[width=1\textwidth]{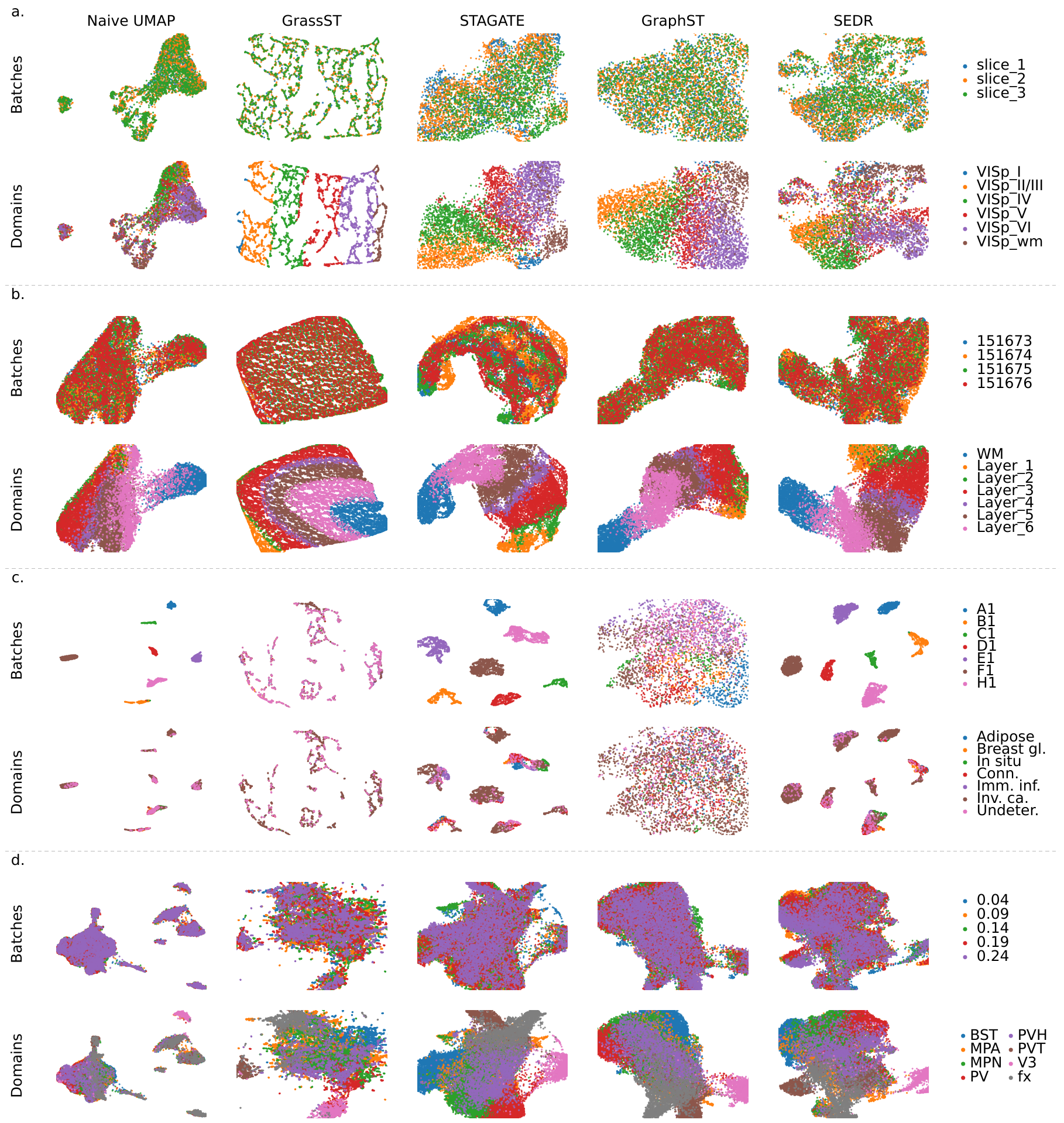}
\caption{UMAP visualization of batch alignment and domain preservation across four spatial transcriptomics datasets.
Each panel (a–d) corresponds to one dataset: (a) Barista (3 slices), (b) DLPFC (4 sections: 151673–151676), (c) HER2, and (d) MERFISH. Within each panel, the top row colors spots by batch (slice/section identity) and the bottom row colors spots by annotated spatial domain. Columns compare five representations: Naive UMAP (no alignment), GrassST, STAGATE, GraphST, and SEDR. A well-aligned embedding should show (i) mixed batch colors in the top row, indicating successful batch integration, and (ii) clearly separated domain clusters in the bottom row, indicating preserved biological structure. GrassST achieves compact, well-separated domain clusters with strong batch mixing across all four datasets, outperforming competing methods particularly in datasets with complex domain architectures (DLPFC) and sparse spot distributions (HER2).
}
\label{fig:umap}
\end{figure}

Additional spatial coherence analyses and tissue-coordinate visualizations are
provided in Section S5 of the Supporting Information. These results include
CHAOS and PAS evaluations across datasets and clustering protocols, together
with spatial domain plots comparing predicted assignments with available
ground-truth annotations.

The results also suggest that GrassST performance is not tied to a single
downstream clustering algorithm. Its relative performance is broadly similar
under both Leiden and spectral clustering, indicating that the local Grassmann
manifold representation provides a useful input for different clustering
protocols. The adaptive selection of patch size and subspace dimension further
allows the method to use dataset-specific local scales, which is important for
spatial transcriptomics datasets with different spatial resolutions and tissue
structures.

\subsection{Adaptive Parameter Selection Analysis}
\label{sec:ablation}

A distinctive aspect of GrassST is that the patch size $k$ and subspace rank
$p$ are selected automatically from the spectral energy landscape
$\bar{E}_p(k)$, rather than fixed by hand. To assess whether this selection is
empirically supported, we evaluated GrassST across all 24 combinations of
$k \in \{15,20,25,30\}$ and $p \in \{2,3,4,5,6,7\}$ on all four datasets and
recorded the chordal F1-LISI at each operating point.

Figure~\ref{fig:param_sweep} summarizes the results. On Barista, DLPFC, and
HER2, F1-LISI increases with $p$ and saturates at higher ranks, forming a
plateau across the upper portion of the parameter grid. The adaptively
selected point $(k^*=20,p^*=7)$ lies within this plateau on all three datasets,
achieving F1-LISI within 0.002 of the grid maximum in each case. The
convergence of the four $k$-value curves at larger $p$ also suggests that
performance is not highly sensitive to the exact patch size once a sufficient
neighborhood scale has been reached.

The MERFISH panel shows a different pattern. In this dataset, F1-LISI
decreases as $p$ increases, and the maximum occurs at
$(k^*=15,p^*=2)$. This behavior is consistent with the low-rank branch of the
selection rule, in which the minimum candidate rank at the finest candidate
scale already captures sufficient local spectral energy. Applying the
normal-branch parameter choice $(k=20,p=7)$ to MERFISH leads to much lower
F1-LISI, showing that the low-rank override is important for this dataset.

Taken together, the parameter sweep indicates that the selected operating
points are close to, or coincide with, the empirical optima of the F1-LISI
landscape in the evaluated grid. Fig.~S3 in Section S5 of the Supporting Information provides a detailed view
of the spectral energy landscapes underlying each selection decision. Fig.~S4
in Section S5 further shows that the selections are stable across a range of
$\tau_{\rm rank}$ values, that the energy criterion is satisfied by each
tissue section at the chosen parameters, and that the per-spot energy
distributions support the use of the section-level mean as the summary
statistic for selection.

\begin{figure}[hptb!]
\centering
 \makebox[\textwidth][c]{
\includegraphics[width=1.\textwidth]{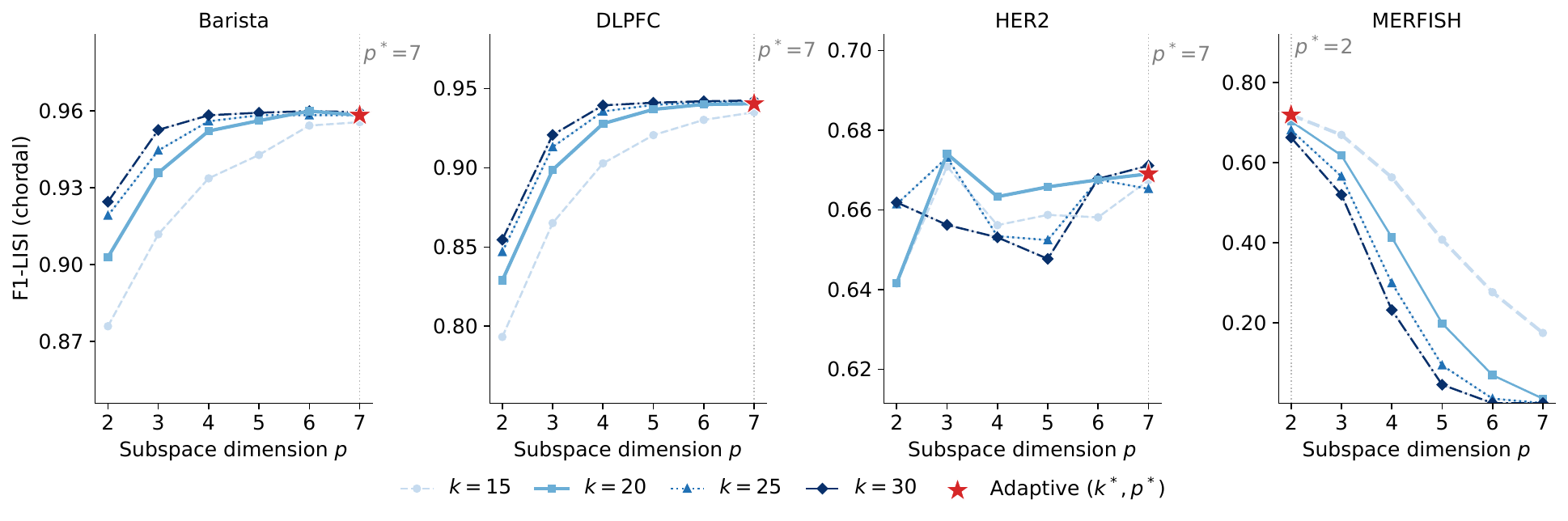}
}
\caption{F1-LISI sensitivity across the full parameter grid.
Each panel shows the chordal F1-LISI as a function of subspace dimension $p$
for all four candidate patch sizes $k \in \{15, 20, 25, 30\}$.
Shaded bands indicate $\pm$1 standard deviation across tissue sections.
The vertical dotted line marks the adaptively selected rank $p^*$, and the
red star ($\bigstar$) indicates the operating point $(k^*, p^*)$ chosen by
the spectral energy criterion.
For the three normal-branch datasets (Barista, DLPFC, HER2), F1-LISI increases
with $p$ and saturates at $p \geq 6$, so $p^*=7$ falls at the onset of
diminishing returns rather than in the steeply rising region.
The curves for different $k$ values converge at large $p$, confirming that
performance is robust to patch size once $k \geq k^*=20$, and the selected
operating point achieves F1-LISI within 0.002 of the grid maximum on every dataset.
For MERFISH (sensitive branch), the trend is reversed: F1-LISI decreases sharply
with $p$, and the global maximum occurs at $p^*=2$, $k^*=15$.
This opposite behaviour is identified by the spectral energy criterion before
parameter selection, triggering the sensitive override and placing the operating
point at the intrinsic low-rank configuration of the data.}
\label{fig:param_sweep}
 \end{figure}

\subsection{Summary}

Fig.~\ref{fig:main_metrics} summarises the primary results across all four datasets.
Per-slice ARI and NMI for both Leiden and spectral clustering, together with the full
per-slice breakdown across all methods, are provided in Tables~S4--S12 in Section S2 of the
Supporting Information.

In Summary, GrassST achieves the highest F1-LISI on three of the four benchmarks
and attains the best average F1-LISI rank among all compared methods.
The advantage is most pronounced on HER2, where STAGATE, SEDR, and PCA fail to
produce a coherent joint embedding (F1-LISI\,$\approx$\,0), while GrassST reaches
$0.669$, more than twice the score of the next valid competitor (GraphST, $0.310$).
On Barista, GrassST leads across all five reported metrics, with a spectral ARI of
$0.770$ against $0.373$ for the next-best method.
On DLPFC, GrassST achieves the highest F1-LISI ($0.940$) despite SEDR obtaining
higher ARI and NMI under both clustering algorithms, reflecting that the two metric
classes measure different aspects of multi-slice analysis.
The one dataset where GrassST does not lead on F1-LISI is MERFISH, where the adaptive selection rule selects the low-rank branch
($k^{*}=15$, $p^{*}=2$), a case discussed further in Section~\ref{dis}.

Spatial coherence metrics provide a complementary view of clustering quality.
Fig.~S1 in Section S3 of the Supporting Information  reports CHAOS and PAS scores across all
datasets and clustering algorithms. GrassST generally achieves low values on
both metrics, indicating that its predicted domains tend to form spatially
coherent regions rather than scattered assignments. This spatial regularity is
observed alongside the integration results, suggesting that the local
Grassmann manifold embedding preserves neighborhood-level tissue structure
while supporting cross-slice comparison.

Performance in datasets with different types of tissues, slice counts, and
sequencing technologies is also supported by the adaptive parameter selection
procedure. Instead of using a fixed patch size and subspace rank for all
datasets, the procedure selects $k$ and $p$ from the spectral energy landscape
of each dataset without dataset-specific manual tuning.

\section{Discussion}
\label{dis}

The results across four spatially and technologically diverse datasets
establish GrassST as the leading method for multi-slice spatial
transcriptomics alignment on the F1-LISI criterion, ranking first on
three of four benchmarks.
On DLPFC, GrassST achieves F1-LISI scores of 0.958 and 0.940
respectively, leading all compared methods by a substantial margin, and the
UMAP visualisations confirm that the learned representations align tissue
sections while preserving the separation of spatial domains
(Fig.~\ref{fig:main_metrics}, Fig.~\ref{fig:umap}).
These datasets represent well-studied tissue types with clear laminar
organisation, and the results suggest that the local Grassmann Manifold geometry
captures the shared structural patterns across sections reliably.

The HER2 result merits particular attention.
HER2 is a tumour biopsy dataset with seven heterogeneous slices, and it is
the setting where the gap between GrassST and deep learning baselines is
most pronounced.
STAGATE achieves an F1-LISI of 0.000 on this dataset, indicating a complete
failure to align the sections, and GraphST reaches only 0.310, while GrassST
obtains 0.669.
This difference reflects a fundamental characteristic of neural network-based
methods: their representations are learned by optimizing an objective over
the training data, and when the number of slices is small and the tissue is
heterogeneous, there is insufficient signal to guide that optimization toward
a meaningful solution.
GrassST constructs subspaces directly from the spatial neighborhood
of each spot without any training, so its performance does not depend on
the number of sections available.
This property is important in practice, as collecting many spatially
registered slices from the same specimen is often not feasible.

The DLPFC results highlight a distinction between per-slice clustering
quality and cross-slice integration quality.
SEDR achieves higher ARI and NMI than GrassST under both clustering
algorithms on this dataset, yet its F1-LISI ($0.788$) falls well below
GrassST ($0.940$).
A method can assign spots to coherent clusters within each section without
aligning the latent representations of corresponding cells across sections.
F1-LISI captures this cross-slice alignment directly, and the DLPFC result
shows that the two objectives are not equivalent.
This distinction motivates the choice of F1-LISI as the primary evaluation
metric in this study.

The consistency of GrassST across datasets is partly attributable to the
adaptive parameter selection procedure.
Different tissue types have different spectral structures: well-organized
laminar tissue such as DLPFC concentrates variance in a moderate number of
dimensions, while MERFISH data with a curated gene panel is intrinsically
lower-rank.
Assuming a single fixed configuration across all tissue types would impose
a geometry that is inappropriate for most of them.
The spectral energy criterion selects $k$ and $p$ from each dataset's own
variance landscape, and the ablation analysis in Section~\ref{sec:ablation}
confirms that the selected operating points correspond to regions of high
F1-LISI across the parameter grid.

% One limitation of the current framework concerns MERFISH.
% Although the sensitive branch correctly detects the low-rank spatial
% organisation and assigns $(k^*=15, p^*=2)$, GrassST achieves an F1-LISI
% of 0.719, below PCA at 0.789.
% MERFISH measures a curated panel of a few hundred genes at high spatial
% resolution, and the resulting transcriptional geometry is more constrained
% than that of sequencing-based datasets.
% In this regime the two-dimensional subspace representation, while correctly
% identified as appropriate, may not capture sufficient structure to support
% strong alignment.
% Developing richer representations for intrinsically low-rank data, perhaps
% by incorporating spatial coordinates more directly into the subspace
% construction, is a natural direction for future work.

One limitation of the current framework concerns the construction of spatial
neighborhoods in high-resolution datasets such as MERFISH. In the current
implementation, kNN neighborhoods are built from stacked spatial coordinates,
so the within-slice and cross-slice neighbor composition depends on the
relative density and spatial closeness of the slices. This may be less reliable
when slices have different spatial sampling densities or when their coordinates
are not well aligned. This issue may partly contribute to the more challenging behavior observed on
MERFISH compared with the other datasets.

At the same time, the experiments show that GrassST remains competitive in
several datasets without using external spatial coordinate alignment methods
such as PASTE\cite{Zeira2022NatMethods}. This suggests that the local Grassmann manifold representation
can be useful for clustering and cross-slice comparison under a simple
neighborhood construction. A possible future direction is to combine GrassST
with spatial coordinate alignment methods to define cross-slice neighborhoods
more carefully. For example, within-slice neighbors could be assigned using
the original spatial coordinates, while cross-slice neighbors could be assigned
using PASTE-aligned coordinates. Fixed distance thresholds may also help reduce
spurious neighbors near tissue boundaries.

\section{Conclusion}

We have presented GrassST, a framework for multi-slice spatial transcriptomics
alignment that represents the local gene expression neighborhood of each spot
as a low-dimensional subspace on the Grassmann manifold and selects its two
key parameters, the neighborhood size $k$ and the subspace rank $p$,
automatically from the spectral energy landscape of each dataset.
This combination of geometry-aware representation and data-driven parameter
selection removes the need for manual configuration and allows the framework
to generalize across datasets with substantially different structures.

The data-adaptive selection procedure identifies the operating point through a
spectral energy criterion that distinguishes between a normal regime, in
which variance is distributed across multiple dimensions, and a sensitive
regime, in which the data is intrinsically low-rank.
This distinction is not merely a tuning detail: it reflects a genuine
difference in the geometry of the input data, and selecting the wrong branch
leads to a measurable drop in alignment quality.
To our knowledge, GrassST is the first spatial transcriptomics alignment
framework to incorporate such a regime-aware parameter selection without
any labelled input or user intervention.

The experimental results support the practical value of this design.
Across four datasets with different characteristics, GrassST achieves the
highest F1-LISI on three benchmarks and is the only method to produce a
coherent joint embedding on HER2, where seven heterogeneous sections expose
the limits of methods that depend on sufficient training data to converge.
Spatial coherence metrics further confirm that the resulting domain assignments
are anatomically consistent across all evaluated cases.

On datasets such as MERFISH, where only a small panel of genes is measured,
the sensitive branch of the adaptive selection correctly identifies the
low-rank structure and adjusts the parameters accordingly.
GrassST remains competitive in this setting, and the modest gap to the
best-performing baseline points to an opportunity for further improvement
by incorporating spatial coordinates more directly into the subspace
construction.

More broadly, the results suggest that Grassmann manifold geometry, together
with data-adaptive choices of neighborhood scale and subspace rank, can be a
useful component of multi-slice spatial transcriptomics analysis, particularly
when training data are limited or when datasets differ in spatial resolution
and tissue organization.

%  \section*{Data and Code Availability}

% The source code used in this study is available at \url{https://github.com/XiangXiangJY/GrassST0}. The spatial transcriptomics datasets used in the numerical experiments are available  at \url{https://weilab.math.msu.edu/DataLibrary/SpatialTranscriptomics/}.

\section*{Supporting Information}
Additional experimental settings, baseline details, full quantitative results,
and adaptive parameter selection analyses are provided in the Supporting
Information.
 \section*{Acknowledgments}
 This work was supported in part by NIH grants R01AI164266 and R35GM148196, MSU Research Foundation, The University of Georgia, and the Georgia Research Alliance.  

 % 	    \section*{Acknowledgments}
 % This work was supported in part by NIH grants R01AI164266 and R35GM148196, MSU Research Foundation, The University of Georgia, and the Georgia Research Alliance.    %grant R35GM148196, National Science Foundation grant DMS2052983,  Michigan State University Research Foundation, and  Bristol-Myers Squibb 65109.  

%\bibliographystyle{unsrt}
% \bibliographystyle{unsrt}
%\myexternaldocument{supplementary}
\bibliographystyle{unsrt}

\bibliography{refs}
   
\end{document}